\def\Kweb#1{   %for bibliography
http:\linebreak[2]//www.\linebreak[2]math.\linebreak[2]uiuc.%
\linebreak[2]edu/\linebreak[2]{K-theory/\linebreak[2]#1}}
\def\cB{\mathcal B}
\def\cF{\mathcal F}
\def\cI{\mathcal I}
\def\cM{\mathrm{M}} % was \mathcal M}
\def\cT{\mathcal T}
\newcommand{\fin}{\mathfrak{Fin}}
\newcommand{\comment}[1]{}
\newcommand{\GL}{\mathrm{GL}}
\renewcommand{\top}{\rm top}
\newcommand{\Q}{\mathbb{Q}}
\newcommand{\Z}{\mathbb{Z}}
\newcommand{\frakg}{\mathfrak{g}}
\newcommand{\g}{\frakg}
\newcommand{\frakt}{\mathfrak{t}}
\newcommand{\fS}{\mathfrak{S}}
\newcommand{\rE}{\mathrm{E}}
\newcommand{\rB}{\mathrm{B}}
\def\rht{\mathrm{rht}}
\def\norm{\mathrm{norm}}
\def\coker{\operatorname{coker}}
\def\tr{\text{tr}} % was \widetilde{Tr}
\def\map#1{\ {\buildrel #1 \over \lra}\ }
\def\smap#1{\ {\buildrel #1 \over \to}\ }
\def\lmap#1{\ {\buildrel #1 \over \llra}\ }
\def\lra{\longrightarrow}
\def\llra{\relbar\joinrel\longrightarrow}
\def\sg{\mathrm{sg}}
\newcommand{\eps}{\varepsilon}
\newcommand{\Ha}{\mathsf{H}}
\newcommand{\haha}{\hat{\Ha}}
\newcommand{\hotimes}{\hat{\otimes}}
\newcommand{\iso}{\overset{\cong}{\to}}
\numberwithin{equation}{subsection}
\spnewtheorem{thm}[equation]{Theorem}{\bf}{\it}
\spnewtheorem{prop}[equation]{Propostion}{\bf}{\it}
\spnewtheorem{lem}[equation]{Lemma}{\bf}{\it}
\spnewtheorem{cor}[equation]{Corollary}{\bf}{\it}
\spnewtheorem{rem}[equation]{Remark}{\it}{\rm}
\spnewtheorem{ex}[equation]{Remark}{\it}{\rm}
\spnewtheorem{defn}[equation]{Definition}{\bf}{\it}
\begin{document}
\bibliographystyle{plain}

\title*{Relative Chern characters for nilpotent ideals}
\titlerunning{Relative Chern character}

\author{G. Corti\~nas \inst{1} \and C. Weibel \inst{2}}
\institute{Dep. Matem\'atica, Ciudad Universitaria Pab 1, C1428EGA Buenos Aires, Argentina. \texttt{gcorti@dm.uba.ar}
\and Dept.\ of Mathematics, Rutgers University, New Brunswick,
NJ 08901, USA. \texttt{weibel@math.rutgers.edu}}
\thanks{Corti\~nas is supported by FCEyN-UBA and CONICET,
and partially supported by grants
ANPCyT PICT 2006-0836, UBACyT-X057, and MEC MTM00958.}

%\institute{Name and Address of your Institute
%\texttt{name@email.address}
%\and Name and Address of your Institute \texttt{name@email.address}}

\thanks{\hskip-8pt Weibel's research was %partially
supported by NSA grant MSPF-04G-184 and NSF grant DMS-0801060}

%\date{\today}
\maketitle
\numberwithin{equation}{section}
\section{Introduction}

When $A$ is a unital ring, the {\it absolute Chern character}
is a group homomorphism
$%\begin{equation}\label{intro:cha}
ch_*: K_*(A)\to HN_*(A)
$, %\end{equation}
going from algebraic $K$-theory to negative cyclic homology (see
\cite[11.4]{lod}). There is also a relative version, defined for any ideal
$I$ of $A$:
\begin{equation}\label{intro:ch}
ch_*: K_*(A,I)\to HN_*(A,I).
\end{equation}
Now suppose that $A$ is a $\Q$-algebra and that $I$ is nilpotent.
In this case, Goodwillie proved in \cite{goo} that
%if $I$ is nilpotent and $\Q\subset A$, then
\eqref{intro:ch} is an isomorphism. His proof uses another character
\begin{equation}\label{intro:ch'}
ch'_*:K_*(A,I)\to HN_*(A,I)
\end{equation}
which is defined only when $I$ is nilpotent and $\Q\subset A$.
Goodwillie showed that \eqref{intro:ch'} is an isomorphism whenever
it is defined, and that
it coincides with \eqref{intro:ch} when $I^2=0$. Using this and a
5-Lemma argument, he deduced that \eqref{intro:ch} is an isomorphism for
any nilpotent $I$. The question of whether $ch_*$ and $ch'_*$ agree for
general nilpotent ideals $I$ was left open in \cite{goo}, and announced
without proof in \cite[11.4.11]{lod}.
This paper answers the question, proving in Theorem \ref{thm:main} that
\begin{equation}\label{intro:agree}
ch_*=ch_*'\text{ for all nilpotent ideals $I$.}
\end{equation}

Here are some applications of \eqref{intro:agree}.
Let $I$ a nilpotent ideal in a commutative $\Q$-algebra $A$.
Cathelineau proved in \cite{cath} that \eqref{intro:ch'}
preserves the %so-called Hodge (or $\lambda$-)
direct sum decomposition coming from the eigenspaces of $\lambda$-operations
and/or Adams operations. By \eqref{intro:agree}, the relative Chern
character \eqref{intro:ch} preserves the direct sum decomposition for
nilpotent ideals. Next, Cathelineau's result and \eqref{intro:agree} are
used in \cite{chwinf} to prove that the absolute Chern character $ch_*$
also preserves the direct sum decomposition. In addition, our result
 \eqref{intro:agree} can be used to strengthen
Ginot's results in \cite{ginot}.

This paper is laid out according to the following plan.
In section \ref{sec1} we show that the bar complex $\rB(\Ha)$
of a cocommutative Hopf algebra $\Ha$ has a natural cyclic module structure.
The case $\Ha=k[G]$ is well known, and the case of enveloping
algebras is implicit in \cite{kas}. In section \ref{sec2}, we relate
this construction to the usual cyclic module of the algebra underlying $\Ha$.
In section \ref{sec3} we consider a Lie algebra $\g$ and factor the
Loday-Quillen map $\wedge^n\g\to C^\lambda_{n-1}(U\g)$ through our construction.
In section \ref{sec4} we consider a nilpotent Lie algebra $\g$ and
its associated nilpotent group $G$ and relate the constructions for
$U\g$ and $\Q[G]$ using the Mal'cev theory of \cite[App. A]{Q-RHT}.
In section \ref{sec5}, we review the definitions of $ch_*$ and $ch'_*$
and prove our main theorem, that \eqref{intro:agree} holds.

\numberwithin{equation}{subsection}

\subsection*{Notation}
If $M=(M_*,b,B)$ is a mixed complex (\cite[2.5.13]{lod}), we will write
$HH(M)$ for the chain complex $(M_*,b)$, and $HN(M)$ for the total complex
of Connes' left half-plane $(b,B)$-complex (written as $\cB M^-$ in
\cite[5.1.7]{lod}). By definition, the homology of $HH(M)$
is the Hochschild homology $HH_*(M)$ of $M$, the homology  of $HN(M)$
is the negative cyclic homology $HN_*(M)$ of $M$, and the projection
$\pi:HN(M)\to HH(M)$ induces the canonical map $HN_*(M)\to HH_*(M)$.

We refer the reader to \cite[2.5.1]{lod} for the notion of a cyclic module.
There is a canonical cyclic $k$-module $C(A)$ associated to
any algebra $A$, with $C_n(A)=A^{\otimes n+1}$, whose underlying
simplicial module $(C,b)$ is the Hochschild complex (see \cite[2.5.4]{lod}).
We will write $HH(A)$ and $HN(A)$ in place of the more awkward expressions
$HH(C(A))$ and $HN(C(A))$.

We write $C_{\ge n}$ for the good truncation $\tau_{\ge n}C$ of a chain
complex $C$ \cite[1.2.7]{chubu}. If $n\ge 0$, then $C_{\ge n}$ can and will
be regarded as a simplicial module via the Dold-Kan correspondence
\cite[8.4]{chubu}.
%If the boundary map $C_1\to C_0$ is zero, then $C_{\ge1}$
%coincides with the brutal truncation of $C$; for example this is
%the case when $\Ha$ is a Hopf algebra and $C=\rB(\Ha)$.

\medskip

\section{Cyclic homology of cocommutative Hopf algebras}\label{sec1}

If $A$ is the group algebra of a group $G$, then the bar
resolution $\rE(A)=k[\rE G]$ admits a cyclic $G$-module structure and the
bar complex $\rB(A)=k[\rB G]$ also admits a cyclic $k$-module structure
\cite{lod}.  In this section, we show that the cyclic modules
$\rE(A)$ and $\rB(A)$ can be defined for any cocommutative Hopf algebra $A$.

%In the next section, this construction will be compared to the
%canonical cyclic $k$-module $C(A)$ associated to any algebra
%$A$, whose underlying simplicial module $(C,b)$ is the Hochschild
%complex, and $C_n(A)=A^{\otimes n+1}$ (see \cite[2.5.4]{lod}).

\subsection{Bar resolution and bar complex of an augmented algebra}
\label{subsec:bars}
Let $k$ be a commutative ring, and $A$ an augmented unital
$k$-algebra, with augmentation $\epsilon:A\to k$. We write $\rE(A)$
for the {\it bar resolution} of $k$ as a left $A$-module
(\cite[8.6.12]{chubu}); this is the simplicial $A$-module
$\rE_n(A)=A^{\otimes n+1}$, whose face and degeneracy operators
are given by
\begin{gather}
\mu_i:\rE_n(A)\to \rE_{n-1}(A)\qquad( i=0,\dots,n)\nonumber\\
\mu_i(a_0\otimes\dots\otimes a_n)=a_0\otimes\dots\otimes
a_ia_{i+1}\otimes\dots\otimes a_n\qquad (i<n)\nonumber\\
\mu_n(a_0\otimes\dots\otimes a_n)=\eps(a_n)a_0\otimes\dots\otimes a_{n-1}
\label{map:simpE}\\
s_j:\rE_n(A)\to \rE_{n+1}(A)\qquad (j=0,\dots, n)\nonumber\\
s_j(a_0\otimes\dots\otimes a_n)=a_0\otimes\dots \otimes
a_{j}\otimes 1\otimes a_{j+1}\otimes\dots\otimes a_n\nonumber
\end{gather}
We write $\partial'$ for the usual boundary map
$\sum_{i=0}^n(-1)^i\mu_i:\rE_n(A)\to \rE_{n-1}(A)$.
The augmentation induces a quasi-isomorphism $\epsilon:\rE(A)\to k$.
The unit $\eta:k\to A$ is a $k$-linear homotopy inverse of $\epsilon$;
we have $\epsilon\eta=1$ and the extra degeneracy
$s:\rE(A)_n\to\rE(A)_{n+1}$,
\begin{equation}\label{map:s}
s(x)=1\otimes x
\end{equation}
satisfies $1-\eta\epsilon=[\partial',s]$. The {\it bar complex} of $A$ is
$\rB(A)=k\otimes_A\rE(A)$; $\rB_n(A)=A^{\otimes n}$.
We write $\partial=1\otimes\partial':\rB(A)_n\to\rB(A)_{n-1}$
for the induced boundary map, and $\rE(A)_\norm$ and $\rB(A)_\norm$ for the
normalized complexes.

\subsection{The cyclic module of a cocommutative coalgebra}
If $C$ is a $k$-coalgebra, with counit $\epsilon:C\to k$
and coproduct $\Delta$, we have a
simplicial $k$-module $R(C)$, with $R_n(C)=C^{\otimes n+1}$, and face
and degeneracy operators given by
\begin{gather*}
\eps_i:R_n(C)\to R_{n-1}(C)\qquad (i=0,\dots,n)\\
\eps_i(c_0\otimes\dots\otimes c_n)=
\epsilon(c_i) c_0\otimes\dots\otimes c_{i-1}\otimes c_{i+1}\otimes\dots c_n\\
\Delta_i:R_n(C)\to R_{n+1}(C)\qquad (i=0,\dots,n)\\
\Delta_i(c_0\otimes\dots\otimes c_n)=c_0\otimes\dots
c_{i-1}\otimes c_i^{(0)}\otimes c_i^{(1)}\otimes\dots\otimes c_n
\end{gather*}
Here and elsewhere we use the (summationless) Sweedler notation
$\Delta(c)=c^{(0)}\otimes c^{(1)}$ of \cite{Sweedler}.

We remark that each $R_n(C)$ has a coalgebra structure,
and that the face maps $\eps_i$ are coalgebra homomorphisms.
%while the degeneracies $\Delta_i$ are bicomodule homomorphisms.
If in addition $C$ is cocommutative, then the degeneracies $\Delta_j$
are also coalgebra homomorphisms, and $R(C)$ is a simplicial coalgebra.
In fact $R_n(C)$ is the product of $n+1$ copies of $C$ in the category
of cocommutative coalgebras, and $R(C)$ is a particular case of the
usual product simplicial resolution of an object in a category with
finite products, which is a functor not only on the simplicial category
of finite ordinals and monotone maps, but also on the larger category
$\fin$ with the same objects, but where a homomorphism is just any set
theoretic map, not necessarily order preserving.
By \cite[6.4.5]{lod} we have:

\begin{lem} \label{map:tc}
For a cocommutative coalgebra $C$, the simplicial module $R(C)$
has the structure of a cyclic $k$-module, with cyclic operator
\begin{equation*}
\lambda(c_0\otimes\dots\otimes c_n)=
(-1)^nc_n\otimes c_0\otimes\dots\otimes c_{n-1}.
\end{equation*}
\end{lem}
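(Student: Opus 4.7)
The plan is to deduce the lemma directly from the general principle invoked in the excerpt: if $\cC$ is a category with finite products, the simplicial object $n\mapsto X^{n+1}$ associated to any $X\in\cC$ extends canonically to a cyclic object, by Loday's result \cite[6.4.5]{lod}. So the task reduces to identifying $R(C)$ as such a product resolution in a suitable category.

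The first step is to verify that in the category of cocommutative $k$-coalgebras, the tensor product is the categorical product, with projections given by the counit maps $C\otimes D\to C$, $c\otimes d\mapsto \epsilon(d)c$ (and symmetrically). The universal property uses cocommutativity in an essential way: the unique map $E\to C\otimes D$ recovering a pair of coalgebra morphisms $f:E\to C$, $g:E\to D$ is given by $(f\otimes g)\circ\Delta_E$, and checking that this is itself a coalgebra map requires the cocommutativity of $E$ (or of $C\otimes D$, which amounts to the same thing since both $C$ and $D$ are cocommutative). Granting this, the tensor power $C^{\otimes n+1}$ is the $(n+1)$-fold product of $C$ in cocommutative coalgebras.

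Next I identify the simplicial structure on $R(C)$ with the standard one on the product resolution $n\mapsto C^{n+1}$. Under the identification above, the face operator $\eps_i$ is the projection off the $i$-th factor (it inserts the counit in position $i$), and the degeneracy $\Delta_i$ is the diagonal on the $i$-th factor (it inserts a coproduct). These are precisely the face and degeneracy maps of the product simplicial object in any category with finite products. Since cocommutativity makes $\Delta$ a coalgebra map, both $\eps_i$ and $\Delta_i$ are morphisms in the category of cocommutative coalgebras, so we obtain a functor $\Delta^{op}\to\{\text{cocommutative coalgebras}\}$, and indeed a functor out of $\fin^{op}$ by allowing arbitrary maps of finite sets to act through the corresponding combinations of projections and diagonals.

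Now I apply \cite[6.4.5]{lod}: this extension to $\fin^{op}$ automatically produces a cyclic $k$-module structure on the underlying simplicial module, in which the cyclic operator $\lambda_n$ is induced by the cyclic permutation $\sigma:[n]\to[n]$, $\sigma(i)=i-1\pmod{n+1}$, giving
\[
\lambda_n(c_0\otimes\dots\otimes c_n)=(-1)^n\, c_n\otimes c_0\otimes\dots\otimes c_{n-1},
\]
with the sign $(-1)^n$ being the usual Koszul contribution from permuting $n+1$ entries cyclically, as in the corresponding formula for the cyclic module $C(A)$ of an associative algebra \cite[2.5.4]{lod}. The main thing to be careful about is bookkeeping: confirming that the face/degeneracy operators written in Sweedler notation really do agree with the categorical projections and diagonals on $C^{\otimes n+1}$, and that the sign convention in the cyclic operator matches the one produced by the general construction in \cite[6.4.5]{lod}. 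Once this identification is made, no further computation is required; in particular the cyclic identities are inherited from the general statement.
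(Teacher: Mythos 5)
Your proposal follows exactly the paper's own argument: the paper identifies $R(C)$ as the product simplicial resolution of $C$ in the category of cocommutative coalgebras (where $\otimes$ is the categorical product) and cites \cite[6.4.5]{lod} to obtain the cyclic structure, precisely as you do. Your additional verification that the tensor product is the categorical product and that the face/degeneracy maps match the projections/diagonals is a useful expansion of the same reasoning rather than a different route.
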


\begin{ex}
Let $G$ be a group; write $k[G]$ for the group algebra. Note $k[G]$ is
a Hopf algebra, and in particular, a coalgebra, with coproduct
determined by %$\Delta:k[G]\to k[G]\otimes k[G]$,
$\Delta(g)=g\otimes g$. The cyclic module $R(k[G])$ thus defined is precisely
the cyclic module whose associated cyclic bicomplex was considered by
Karoubi in \cite[2.21]{kar}, where it is written $\tilde{C}_{**}(G)$.
\end{ex}

\subsection{The case of Hopf algebras}
Let $\Ha$ be a Hopf algebra with unit $\eta$, counit $\epsilon$ and
antipode $S$. We shall assume that $S^2=1$, which is the case for all
cocommutative Hopf algebras. We may view $R(\Ha)$ as
a simplicial left $\Ha$-module via the diagonal action:
\[
a\cdot (h_0\otimes\dots\otimes h_n)=a^{(0)}h_0\otimes\dots\otimes a^{(n)}h_n.
\]
Consider the maps (defined using summationless Sweedler notation):
\begin{gather}
\alpha:\rE_n(\Ha)\to R_n(\Ha),\nonumber\\
\alpha(h_0\otimes\dots\otimes h_n)=h_0^{(0)}\otimes
h_0^{(1)}h_1^{(0)}\otimes\dots\otimes
h_0^{(n)}h_1^{(n-1)}\!\cdots h_{n-1}^{(1)}h_n\label{alpha}\\
\beta:R_n(\Ha)\to \rE_n(\Ha),\nonumber\\ \beta(h_0\otimes\dots\otimes h_n)
=h_0^{(0)}\otimes(Sh_0^{(1)})h_1^{(0)}\otimes\dots\otimes
(Sh_{n-1}^{(1)})h_n\label{beta}
\end{gather}
A straightforward computation reveals:

\begin{lem}\label{lem:ab}
The maps \eqref{alpha} and \eqref{beta} are inverse isomorphisms of
simplicial $\Ha$-modules: $\rE_n(\Ha)\cong R_n(\Ha)$.
\end{lem}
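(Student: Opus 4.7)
The plan is to recognise $\alpha$ and $\beta$ as iterated applications of the standard Hopf-algebra shearing isomorphism
\[
\tau:\Ha\otimes\Ha\to\Ha\otimes\Ha,\qquad \tau(h\otimes k)=h^{(0)}\otimes h^{(1)}k,
\]
whose inverse $\tau^{-1}(h\otimes k)=h^{(0)}\otimes(Sh^{(1)})k$ exists by the antipode axiom and coassociativity. A short Sweedler computation, using that $\Delta$ is an algebra homomorphism, yields the factorisations
\[
\alpha_n=(1\otimes\alpha_{n-1})\circ(\tau\otimes 1^{\otimes(n-1)}),\qquad
\beta_n=(\tau^{-1}\otimes 1^{\otimes(n-1)})\circ(1\otimes\beta_{n-1})
\]
(with $\alpha_0=\beta_0=\mathrm{id}_\Ha$), which reduces every assertion about $\alpha_n,\beta_n$ to the single-shear case by induction on $n$.

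From this factorisation the module-theoretic content drops out immediately: each factor in the decomposition of $\alpha_n$ is invertible with inverse the corresponding factor in the decomposition of $\beta_n$, so $\beta_n\alpha_n=\alpha_n\beta_n=\mathrm{id}$; and since $\tau$ is $\Ha$-linear from left-multiplication on the outer factor to the diagonal action on the pair, induction shows that $\alpha_n$ intertwines the action on the first tensor factor of $\rE_n(\Ha)$ with the diagonal action on $R_n(\Ha)$. Neither cocommutativity nor $S^2=1$ enters this step.

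What remains are the simplicial compatibilities $\alpha\,\mu_i=\eps_i\,\alpha$ and $\alpha\,s_j=\Delta_j\,\alpha$. For the degeneracies, inserting a unit on the $\rE$-side gives $1^{(p)}=1$ for every iterated coproduct component, so the Sweedler factors attached to the inserted slot collapse, and the result is visibly a splitting of the $j$-th slot of $\alpha(x)$ by $\Delta$. For the boundary face $\mu_n\leftrightarrow\eps_n$, the scalar $\eps(h_n)$ on the left matches $\eps$ applied to the final slot $h_0^{(n)}h_1^{(n-1)}\cdots h_{n-1}^{(1)}h_n$ on the right, and each extra counit $\eps(h_j^{(n-j)})$ that appears is absorbed back into the iterated coproduct of $h_j$ using the counit identity $h^{(0)}\eps(h^{(1)})=h$ and coassociativity.

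The inner faces $\mu_i$ for $0\le i<n$ are the main obstacle. Here multiplying $h_ih_{i+1}$ on the $\rE$-side must match the counit applied to the $i$-th slot $h_0^{(i)}h_1^{(i-1)}\cdots h_{i-1}^{(1)}h_i^{(0)}$ on the $R$-side, after which the surviving Sweedler superscripts on each $h_j$ with $j\le i$ must reindex (by shifting down by one those originally above $i-j$) exactly to the pattern produced by $\alpha_{n-1}$ on the contracted tensor $h_0\otimes\cdots\otimes h_ih_{i+1}\otimes\cdots\otimes h_n$. This is a finite combinatorial check that rests only on the counit identity above, coassociativity, and the fact that the iterated coproduct is an algebra map, so $(h_ih_{i+1})^{(p)}=h_i^{(p)}h_{i+1}^{(p)}$ slot by slot; once the factorisation of $\alpha_n$ is in hand the verification reduces to applying these identities in each tensor position.
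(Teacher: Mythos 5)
Your proposal is correct. The paper itself offers no argument for this lemma beyond the remark that ``a straightforward computation reveals'' it, so in effect you are supplying the computation it omits, and you organize it well: the factorizations $\alpha_n=(1\otimes\alpha_{n-1})\circ(\tau\otimes 1^{\otimes(n-1)})$ and $\beta_n=(\tau^{-1}\otimes 1^{\otimes(n-1)})\circ(1\otimes\beta_{n-1})$ do hold (by coassociativity and the fact that the iterated coproduct is an algebra map), they reduce mutual invertibility to the single shearing isomorphism $\tau$, whose inverse is exactly the antipode axiom, and your inductive argument for $\Ha$-linearity (left multiplication on the first slot of $\rE_n(\Ha)$ versus the diagonal action on $R_n(\Ha)$) goes through. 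Your observation that neither cocommutativity nor $S^2=1$ is needed for this part is also accurate; in the paper those hypotheses are only used later, for the cyclic operator on $R(\Ha)$. The one place where you stop short of a full verification is the compatibility with the inner faces $\mu_i\leftrightarrow\eps_i$ ($i<n$): you describe the correct mechanism -- $(h_ih_{i+1})^{(p)}=h_i^{(p)}h_{i+1}^{(p)}$ on one side matching the counit collapse and downward reindexing of the Sweedler superscripts of $h_0,\dots,h_i$ on the other -- but leave the bookkeeping as a ``finite combinatorial check.'' That check does close without incident, and the level of detail you give already exceeds the paper's, so this is a presentational remark rather than a gap.
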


\subsection{Cyclic complexes of cocommutative Hopf algebras}
From now on, we shall assume that $\Ha$ is a cocommutative Hopf
algebra. In this case the cyclic operator $\lambda:R(\Ha)\to R(\Ha)$
of \ref{map:tc} is a homomorphism of $\Ha$-modules. Thus $R(\Ha)$ is a cyclic
$\Ha$-module, and we can use the isomorphisms $\alpha$ and $\beta$
of Lemma \ref{lem:ab} to translate this
structure to the bar resolution $\rE(\Ha)$. We record this as a
corollary:

\begin{cor}\label{map:t}
When $\Ha$ is a cocommutative Hopf algebra, $\rE(\Ha)$ is a cyclic
$\Ha$-module, and $\rB(\Ha)=k\otimes_{\Ha}\rE(\Ha)$ is a cyclic
$k$-module. The  cyclic operator
$t:=\beta\lambda\alpha$ on $\rE(\Ha)$ is given by the formulas
$t(h)=h$, $t(h_0\otimes h_1)=-h_0h_1^{(0)}\otimes Sh_1^{(1)}$ and:
\begin{equation*}
t(h_0\otimes\dots\otimes h_n)=(-1)^nh_0 h_1^{(0)}\dots
h_n^{(0)}\otimes S(h_1^{(1)}\dots h_n^{(1)})\otimes h_1^{(2)}\otimes
\dots\otimes h_{n-1}^{(2)}.
\end{equation*}
\end{cor}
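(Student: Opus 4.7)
The approach is to transport the cyclic structure on $R(\Ha)$ coming from Lemma \ref{map:tc} across the simplicial isomorphism of Lemma \ref{lem:ab} to $\rE(\Ha)$, and then descend to $\rB(\Ha)$ by tensoring with $k$ over $\Ha$. The structural claims — that $\rE(\Ha)$ is a cyclic $\Ha$-module and that $\rB(\Ha)$ is a cyclic $k$-module — will be essentially formal; the real work is producing the closed-form expression for $t$.

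First I would verify that cocommutativity forces $\lambda:R(\Ha)\to R(\Ha)$ to be $\Ha$-linear for the diagonal action. Expanding both sides in Sweedler notation, one has
\begin{equation*}
a\cdot\lambda(h_0\otimes\cdots\otimes h_n)=(-1)^n\,a^{(0)}h_n\otimes a^{(1)}h_0\otimes\cdots\otimes a^{(n)}h_{n-1},
\end{equation*}
whereas
\begin{equation*}
\lambda(a\cdot(h_0\otimes\cdots\otimes h_n))=(-1)^n\,a^{(n)}h_n\otimes a^{(0)}h_0\otimes\cdots\otimes a^{(n-1)}h_{n-1}.
\end{equation*}
These agree because iterated cocommutativity makes the tensor $a^{(0)}\otimes a^{(1)}\otimes\cdots\otimes a^{(n)}$ invariant under arbitrary permutation of its Sweedler components; in particular, the cyclic one. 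Thus $R(\Ha)$ is a cyclic $\Ha$-module, and pulling back along the simplicial $\Ha$-isomorphism $\alpha$ of Lemma \ref{lem:ab} endows $\rE(\Ha)$ with the cyclic operator $t=\beta\lambda\alpha$. Since $k$ is an $\Ha$-module via $\epsilon$ and $t$ is $\Ha$-linear, $t$ descends to a cyclic $k$-module operator on $\rB(\Ha)=k\otimes_\Ha\rE(\Ha)$, settling the two structural claims.

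For the explicit formula I would unravel $\beta\lambda\alpha$ directly. The map $\alpha$ places the telescoped product $h_0^{(i)}h_1^{(i-1)}\cdots h_{i-1}^{(1)}h_i$ in slot $i$; the shift $\lambda$ moves the slot-$n$ factor (with sign $(-1)^n$) to slot $0$; and $\beta$ then reassembles by inserting antipodes across each consecutive pair of slots. The antipode identity $h^{(0)}S(h^{(1)})=\epsilon(h)$ collapses almost all of the internal factors, and cocommutativity is used to shuffle Sweedler labels; what survives in slot $i\ge 2$ is the single factor $h_{i-1}^{(2)}$, while slots $0$ and $1$ consolidate into $h_0h_1^{(0)}\cdots h_n^{(0)}$ and $S(h_1^{(1)}\cdots h_n^{(1)})$ respectively. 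I would first verify the pattern by hand for $n=1,2$ and then treat the general $n$ by the same bookkeeping (or by induction, using that $t$ is forced to commute with the face operators $\mu_i$ of \eqref{map:simpE} as well as with the $\Ha$-action).

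The main obstacle is precisely this Sweedler bookkeeping in the last step. The manipulation is purely mechanical, but the simultaneous use of coassociativity, cocommutativity, and the antipode axiom makes it very easy to mis-track indices; the cleanest path is probably to view $\alpha$ as an iterated coproduct applied along a staircase, observe that $\beta$ is its explicit inverse with antipode insertions, and then simplify the composite symbolically.
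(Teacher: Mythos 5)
Your proposal is correct and follows exactly the paper's route: observe that cocommutativity makes $\lambda$ an $\Ha$-module map, transport the cyclic structure of $R(\Ha)$ through the isomorphisms $\alpha,\beta$ of Lemma \ref{lem:ab}, apply $k\otimes_\Ha-$ to get $\rB(\Ha)$, and obtain the formula for $t=\beta\lambda\alpha$ by direct Sweedler computation using the antipode identities and cocommutativity (the paper leaves this computation implicit, just as you outline it). Only a cosmetic caveat: in your parenthetical alternative, $t$ does not commute with the $\mu_i$ but satisfies the cyclic compatibilities $t\mu_i=\mu_{i-1}t$ ($i\ge1$), $t\mu_0=\mu_n$, which is what an inductive characterization would have to use.
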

\smallskip

\begin{ex}
If $g_0,\dots, g_n\in\Ha$ are grouplike elements then
\[
t(g_0\otimes\dots\otimes g_n)=(-1)^ng_0\dots g_n\otimes
(g_1\dots g_n)^{-1}\otimes g_1\otimes\dots\otimes g_{n-1}.
\]
In particular, for $\Ha=k[G]$, $\rB(k[G])$ with the cyclic structure of
\ref{map:t} is the cyclic module associated to the cyclic set $\rB(G,1)$
of \cite[7.3.3]{lod}.
% and $\cM(k[G])$ is the corresponding normalized mixed complex.
\end{ex}

For the extra degeneracy of \cite[2.5.7]{lod},
\begin{equation}\label{map:s'}
s'=(-1)^{n+1}ts_n:\rE_n(\Ha)\to\rE_{n+1}(\Ha),
\end{equation}
it is immediate from \eqref{map:simpE} and \ref{map:t} that
$s'$ is signfree:
\[
s'(h_0\otimes\dots\otimes h_n)=h_0 h_1^{(0)}\dots h_n^{(0)}\otimes
S(h_1^{(1)}\dots h_n^{(1)})\otimes h_1^{(2)}\otimes\dots\otimes h_{n}^{(2)}.
\]
By \ref{map:t}, the Connes' operator $B'$ is the $\Ha$-module homomorphism:
\begin{equation}\label{map:B'unnorm}
B'= %(-1)^{n+1}(1-t)s'\sum_{i=0}^nt^i:\rE_n(\Ha)\to\rE_{n+1}(\Ha).
    (1-t)s'\sum_{i=0}^nt^i:\rE_n(\Ha)\to\rE_{n+1}(\Ha).
\end{equation}
(See \cite[p.\ 569]{LQ}.) %\edit{not \cite[2.5.10]{lod} !}
We write $B:\rB(\Ha)_n\to \rB(\Ha)_{n+1}$ for the induced $k$-module map.
\smallskip

\begin{defn}\label{def:M(H)}
We define the mixed $\Ha$-module complex
$\cM'=\cM'(\Ha)$, and the mixed $k$-module complex $\cM=\cM(\Ha)$
to be the normalized mixed complexes associated to the cyclic modules
$\rE_*(\Ha)$ and $\rB_*(\Ha)$ of \ref{map:t}:
\[
\cM'(\Ha):=(\rE_*(\Ha)_{\norm},\partial',B')
\]
%is a mixed complex of $\Ha$-modules, and
\[
\cM(\Ha):=k\otimes_\Ha\cM'(\Ha)=(\rB_*(\Ha)_\norm,\partial,B).
\]
%is a mixed complex of $k$-modules.
\end{defn}
\smallskip

\begin{rem}\label{rem:B'=B"} Consider the map
$s''=(-1)^ns_n:\rE_n(\Ha)\to \rE_{n+1}(\Ha)$, and set $B''=-ts''N$,
where as usual $N=\sum t^i$ is the norm map.
Then $B'=B''$ on $\rE(\Ha)_\norm$ because
the relations $s_0t_n=(-1)^nt_{n+1}^2s_n$ and $tN=N$ yield
for all $x\in\rE_n(\Ha)$:
\begin{align*}
(B''-B')(x)&=((-1)^{n+1}ts_nN+(-1)^n(1-t)ts_nN)(x)\\
             &=(-1)^n(-ts_nN(x)+ts_nN(x)-t^2s_nN(x))\\
             &=-s_0tN(x)=-s_0N(x)\equiv 0\text{\quad in $\rE(\Ha)_\norm$.}
\end{align*}
\end{rem}
\smallskip

\begin{lem}\label{map:B'}
The map $B':\rE(\Ha)_{\norm}\to \rE(\Ha)_{\norm}[1]$ induced
by \eqref{map:B'unnorm} is given by the explicit formula:
%\edit{Lemma \ref{map:B'} used only in \ref{B'(prim)}}
\begin{gather*}
B'(h_0\otimes\dots\otimes h_n)=\\
\sum_{i=0}^n(-1)^{ni}h_0h_1^{(0)}\!\cdots h_{n-i}^{(0)}\otimes
h_{n-i+1}^{(0)}\otimes\dots\otimes h_n^{(0)}\otimes
S(h_1^{(1)}\!\cdots h_n^{(1)})\otimes
h_1^{(2)}\otimes\dots\otimes h_{n-i}^{(2)}.\nonumber
\end{gather*}
\end{lem}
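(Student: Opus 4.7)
The first step is to use Remark~\ref{rem:B'=B"} to replace $B'$ on the normalized complex by a more tractable expression. Since $s''=(-1)^n s_n$, one has $-ts''=(-1)^{n+1}ts_n=s'$, so the remark gives
\[
B' \;=\; B'' \;=\; s'\circ N, \qquad N \;=\; 1+t+\cdots+t^n,
\]
on $\rE(\Ha)_\norm$. Hence it suffices to verify that, for each $0\le i\le n$, the piece $s'\bigl(t^i(h_0\otimes\cdots\otimes h_n)\bigr)$ equals the $i$-th summand (with its sign $(-1)^{ni}$) of the stated formula. For $i=0$ this is immediate from the explicit formula for $s'$ displayed just after~\eqref{map:s'}.

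The second step is to transport the computation to $R(\Ha)$ via the isomorphism $\alpha\colon\rE(\Ha)\to R(\Ha)$ of Lemma~\ref{lem:ab}, where the cyclic action is the naked signed cyclic shift of Lemma~\ref{map:tc},
\[
\lambda^i(c_0\otimes\cdots\otimes c_n) \;=\; (-1)^{ni}\,c_{n-i+1}\otimes\cdots\otimes c_n\otimes c_0\otimes\cdots\otimes c_{n-i}.
\]
Because $\alpha,\beta$ are simplicial and $t=\beta\lambda\alpha$, the extra degeneracy $s'_R=(-1)^{n+1}\lambda\Delta_n$ on $R(\Ha)$ is computed directly from the definition of $\Delta_n$ to be
\[
s'_R(c_0,\ldots,c_n) \;=\; \bigl(c_n^{(1)},\,c_0,\,\ldots,\,c_{n-1},\,c_n^{(0)}\bigr).
\]
Composing, one reads off
\[
B'_R(c_0,\ldots,c_n) \;=\; \sum_{i=0}^n (-1)^{ni}\bigl(c_{n-i}^{(1)},\,c_{n-i+1},\,\ldots,\,c_n,\,c_0,\,\ldots,\,c_{n-i-1},\,c_{n-i}^{(0)}\bigr),
\]
and the formula claimed in the lemma is then $\beta\circ B'_R\circ\alpha$ applied to $h_0\otimes\cdots\otimes h_n$.

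The main technical obstacle is simplifying this composite. Applying $\alpha$ produces the products $y_j=h_0^{(j)}h_1^{(j-1)}\cdots h_{j-1}^{(1)}h_j$, carrying deeply nested iterated coproducts; the split in $s'_R$ and the telescoping antipodes introduced by $\beta$ add further layers of Sweedler indices. These nested coproducts collapse only after systematic use of the antipode identity $c^{(0)}S(c^{(1)})=\epsilon(c)$, the counit relation $(\epsilon\otimes 1)\Delta=1$, cocommutativity in the form $\Delta S=(S\otimes S)\Delta$, and the multiplicativity of $\Delta$. The careful bookkeeping shows that in the $i$-th summand every $h_j$ with $1\le j\le n-i$ ends up carrying a triple coproduct while every $h_j$ with $n-i+1\le j\le n$ keeps only a double coproduct, and these components assemble into exactly the formula of the lemma. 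A more pedestrian alternative is induction on $i$: starting from the base case $s'(h_0\otimes\cdots\otimes h_n)$ and computing $s'\circ t$ applied to the previous summand, one simplifies via the same Hopf algebra identities to the next summand; but the bookkeeping is essentially the same.
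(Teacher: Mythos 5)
You set the argument up correctly, and up to the last step it matches the paper's own starting point: $B'=B''=s'N$ on $\rE(\Ha)_\norm$ follows from Remark \ref{rem:B'=B"} together with \eqref{map:s'}, your sign $(-1)^{ni}$ and shift for $\lambda^i$ are right, and your formulas for $s'_R$ and $B'_R$ on $R(\Ha)$ are correct. The problem is that the actual content of the lemma is never established. The lemma \emph{is} an explicit Sweedler-notation formula, and everything formal in your reduction (including the passage to $R(\Ha)$, since $t=\beta\lambda\alpha$ by Corollary \ref{map:t}) merely restates that $B'=\beta\,B'_R\,\alpha$; the whole difficulty sits in the final claim that $\beta\circ s'_R\lambda^i\circ\alpha$ applied to $h_0\otimes\cdots\otimes h_n$ collapses to the stated $i$-th summand. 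There you only list the identities that would be invoked and assert that ``careful bookkeeping'' gives the result; the description of which $h_j$ end up with triple versus double coproducts is read off from the statement being proved, not derived. Until the telescoping of the antipodes introduced by $\beta$ against the nested coproducts produced by $\alpha$ (plus the extra split from $s'_R$) is actually carried out, no formula has been verified. That computation is precisely what the paper's proof consists of, in the equivalent form of an explicit formula for $t^i(h_0\otimes\cdots\otimes h_n)$, proved by induction on $i$ using cocommutativity, after which each summand is obtained by a single application of $(-1)^{n+1}ts_n$.

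A secondary point: your ``pedestrian alternative'' as phrased does not quite work. The $i$-th summand $s't^i(x)$ is not obtained by applying $s'\circ t$ to the previous summand $s't^{i-1}(x)$, since $s'$ and $t$ do not commute in the way that would allow you to recover $t^{i-1}(x)$ from $s't^{i-1}(x)$; the induction must be run on the powers $t^i(x)$ themselves (the paper's route), with $s'$ applied only at the end. Either route — conjugating by $\alpha,\beta$ or inducting on $t^i$ — is viable, but in both cases the inductive Hopf-algebra computation you skipped is the proof, so as it stands the proposal has a genuine gap.
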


\begin{proof}
For convenience, let us write $Sh^{(1)}$ for $S(h_1^{(1)}\!\cdots h_n^{(1)})$.
It follows from \ref{map:t}, cocomutativity and induction on $i$ that
$t^i(h_0)=h_0$, and if $i\le n$ then:
\begin{gather*}
t^i(h_0\otimes \cdots \otimes h_n)= \\ (-1)^{ni}h_0h_1^{(0)}\dots
h_{n-i+1}^{(0)}\otimes h_{n-i+2}^{(0)}\otimes \dots \otimes h_n^{(0)}
\otimes Sh^{(1)} %S(h_1^{(1)}\dots h_n^{(1)})
\otimes h_1^{(2)}\otimes \dots \otimes h_{n-i}^{(2)}.
\end{gather*}
Let $s''=(-1)^ns_n$ be as in Remark \ref{rem:B'=B"} and set $m=n(i+1)+1$.
We have
\begin{gather*}
-ts''t^i(h_0\otimes \dots \otimes h_n)=
(-1)^{n+1}ts_nt^i(h_0\otimes \dots \otimes h_n)\\
=(-1)^{m}t(h_0h_1^{0}\dots h_{n-i+1}^{(0)}\otimes
h_{n-i+2}^{(0)}\otimes \dots \otimes h_n^{(0)}
\otimes Sh^{(1)} %S(h_1^{(1)}\dots h_n^{(1)})
\otimes h_1^{(2)}\otimes \dots \otimes h_{n-i}^{(2)}\otimes 1)\\
=(-1)^{ni}h_0h_1^{(0)}\dots h_{n-i}^{(0)}\otimes h_{n-i+1}^{(0)}
\otimes\dots\otimes h_{n}^{(0)}\otimes Sh^{(1)} %S(h_1^{(1)}\dots h_n^{(1)})
\otimes h_1^{(2)}\otimes\dots\otimes h_{n-i}^{(2)}.
\end{gather*}
Now sum up over $i$ to get $B''$ and use Remark \ref{rem:B'=B"}.
\quad\qed\end{proof}

\begin{cor}\label{B'(prim)}
Suppose that $x_1,\dots,x_n$ are primitive elements of $\Ha$,
and $h\in\Ha$.
Then $B'(h\otimes x_1\otimes\cdots\otimes x_n)=0~$ in $\rE(\Ha)_\norm$.
\end{cor}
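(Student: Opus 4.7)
The plan is to apply Lemma \ref{map:B'} directly, setting $h_0=h$ and $h_j=x_j$ for $j\ge 1$, and then exploit the primitivity of each $x_j$. Recall that for a primitive $x$ we have $\Delta(x)=x\otimes 1+1\otimes x$, and hence by iteration (using coassociativity) $\Delta^{(2)}(x)=x\otimes 1\otimes 1+1\otimes x\otimes 1+1\otimes 1\otimes x$. Substituting these expansions into the formula for $B'(h\otimes x_1\otimes\cdots\otimes x_n)$, each summand (indexed by $0\le i\le n$) becomes a sum of pure tensors, one for every choice of Sweedler splitting of each $x_j$.

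The next step is to identify which of these pure tensors survive in $\rE(\Ha)_\norm\cong\Ha\otimes\bar{\Ha}^{\,\otimes(n+1)}$, where a tensor is declared zero as soon as any slot other than slot $0$ equals $1$. Examining the $i$-th summand, the explicit formula shows that for $j\in\{n-i+1,\dots,n\}$ the factor $x_j$ contributes to exactly two tensor slots: the middle slot $j-(n-i)$ and the $S$-slot $i+1$. The only Sweedler choice that avoids placing a $1$ in the middle slot is $x_j^{(0)}=x_j,\ x_j^{(1)}=1$. For $j\in\{1,\dots,n-i\}$ the factor $x_j$ contributes to three slots -- slot $0$, the $S$-slot $i+1$, and the tail slot $i+1+j$ -- and the only choice that avoids a $1$ in the tail slot is $x_j^{(0)}=x_j^{(1)}=1,\ x_j^{(2)}=x_j$.

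Finally, I would observe that in every surviving combination the $(1)$-component of each $x_j$ is forced to be $1$. Consequently, the argument of $S$ in slot $i+1$ collapses to $1\cdot 1\cdots 1=1$, and hence the $S$-slot itself equals $S(1)=1$. Since slot $i+1\neq 0$, the tensor is then killed in $\rE(\Ha)_\norm$. Summing over the finitely many indices $i$ and the surviving Sweedler choices yields $B'(h\otimes x_1\otimes\cdots\otimes x_n)=0$. The only subtlety in the argument is combinatorial bookkeeping of which slot each Sweedler component of $x_j$ lands in; there is no serious conceptual obstacle, since once the slot indexing is laid out, primitivity forces all non-trivial copies of the $x_j$ out of the $S$-slot simultaneously.
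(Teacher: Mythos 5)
Your proof is correct and follows the same approach as the paper's: apply Lemma \ref{map:B'} with $h_0=h$, $h_j=x_j$, expand the Sweedler components of the primitives, and observe that every resulting pure tensor is degenerate in $\rE(\Ha)_\norm$. Where the paper simply states ``by inspection, each such term is degenerate,'' you carry out the inspection slot-by-slot, showing that the only Sweedler choices avoiding a $1$ in a middle or tail slot force all the $x_j^{(1)}$ to equal $1$, so the $S$-slot collapses to $1$ and the term is degenerate anyway.
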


\begin{proof} %\edit{used only in \ref{map:psi}}
%Since $B'$ is $\Ha$-linear, it suffices to prove the lemma for $h=1$.
When $x$ is primitive, $x^{(0)}\otimes x^{(1)} \otimes x^{(2)}$ is
$1\otimes1\otimes x + 2\otimes x \otimes1 + x\otimes1\otimes1$.
By Lemma \ref{map:B'}, $B'(h\otimes x_1\otimes\dots\otimes x_n)$
is a sum of terms of the form
\[
\pm h'\otimes x_{j+1}^{(0)}\otimes\dots\otimes x_n^{(0)}\otimes
S(x_1^{(1)}\!\cdots x_n^{(1)})\otimes x_1^{(2)}
\otimes\cdots\otimes x_{j}^{(2)}.
\]
By inspection, each such term is degenerate,
and vanishes in $\rE(\Ha)_\norm$.
\quad\qed\end{proof}

\subsection{Adic filtrations and completion}\label{subsec:adic}
As usual, we can use an adic topology on a Hopf algebra to define
complete Hopf algebras, and topological versions of the
above complexes.

First we recall some generalities about filtrations and
completions of $k$-modules, following \cite{Q-RHT}.  There is a
category of filtered $k$-modules and filtration-preserving maps; a
filtered module $V$ is a module equipped with a
decreasing filtration $V=\cF_0(V)\supseteq \cF_1(V)\supseteq\cdots$.
The completion of $V$ is $\hat{V}=\lim_nV/\cF_nV$; it is a filtered
module in the evident way.
If $W$ is another filtered $k$-module, then the tensor product
$V\otimes W$ is a filtered module with filtration
\begin{equation}\label{filotimes}
\cF_n(V\otimes W)=\sum_{p+q= n}\mathrm{image}(\cF_pV\otimes\cF_qW
\to  V\otimes W).
\end{equation}
We define $\hat{V}\hat{\otimes}\hat{W}$ to be
$\widehat{\hat{V}\otimes\hat{W}}$. Note that
\begin{equation}\label{filcompa}
\cF_n(V\otimes W)\supseteq \mathrm{image}\biggl(\cF_nV\otimes W
 + V\otimes\cF_nW \to V\otimes W\biggr)\supseteq\cF_{2n}(V\otimes W).
\end{equation}
Hence the topology defined by $\{\cF_n(V\otimes W)\}$ is the same as
that defined by $\{\ker(V\otimes W\to V/\cF_nV\otimes W/\cF_nW)\}$.
It follows that $\hat{V}\hat{\otimes}\hat{W}$ satisfies
\begin{equation}\label{hatotimes}
\hat{V}\hat{\otimes}\hat{W} =
\widehat{V\otimes W}.
\end{equation}
All this has an obvious extension to tensor products of finitely many
factors.

If $A$ is a filtered algebra (an algebra which is filtered as a $k$-module
by ideals), then $\hat{A}$ is an algebra. If $I$ is an ideal
then $\{ I^n\}$ is called the $I$-adic filtration on $A$.

Now suppose that $\Ha$ is a cocommutative Hopf algebra,
equipped with the $I$-adic filtration, where $I$ is both
a (2-sided) ideal and a (2-sided) coideal of $\Ha$, closed under the
antipode $S$ and satisfying $\epsilon(I)=0$.
The coideal condition on $I$ means that
\[
\Delta(I)\subset \Ha\otimes I+I\otimes \Ha.
\]
This implies that $\Delta:\Ha\to\Ha\otimes\Ha$ is filtration-preserving;
by \eqref{hatotimes} it induces a map
$\hat\Delta:\haha\to\haha\hat\otimes\haha$, making $\haha$ into a
{\it complete Hopf algebra} in the sense of \cite{Q-RHT}.

Consider the induced
filtrations \eqref{filotimes} in $\rE_n(\Ha)$ and $\rB_n(\Ha)$ ($n\ge0$).
It is clear, from the formulas \eqref{map:simpE} and
our assumption that $\epsilon(I)=0$, that the
simplicial structures of $\rE(\Ha)$ and $\rB(\Ha)$ are compatible with
the filtration; i.e., $\rE(\Ha)$ and $\rB(\Ha)$ are simplicial filtered
objects. Since $I$ is a coideal closed under $S$, the formula in
\ref{map:t} implies that $t$ preserves the filtration.
Thus $\rE(\Ha)$ and $\rB(\Ha)$ are
cyclic filtered modules. It follows that $\cM'(\Ha)$ and $\cM(\Ha)$
are filtered mixed complexes.
%We consider the completion of each of these complexes
% with respect to its filtration.

%Note that the hypothesis on $I$ implies that the
%comultiplication $\Delta$ maps $\cF_n\Ha\to \cF_n(\Ha\otimes\Ha)$;
%using this together with \eqref{filcompa} and \eqref{hatotimes}, we
%see that $\haha$ is a {\it complete topological Hopf algebra}; i.e., it
%is complete (with respect to the topology of the filtration
%$\cF_n\haha=\ker(\haha\to \Ha/I^n)$), and is equipped with a product
%$\haha\hotimes\haha\to \haha$, and a coproduct $\haha\to
%\haha\hotimes\haha$ as well as a unit, a counit and an antipode, all
%of which are continuous.
The identities \eqref{hatotimes} show that
the completed objects $\hat{\rE}(\Ha)$, $\hat{\rB}(\Ha)$, etc.\  depend
only on the topological Hopf algebra $\haha$, and can be regarded as
its topological bar resolution, bar complex etc.\ which are defined
similarly to their algebraic counterparts, but substituting $\hotimes$
for $\otimes$ everywhere. In this spirit, we shall write
$\rE^{\top}(\haha)$, $\rB^{\top}(\haha)$, etc., for $\hat{\rE}(\Ha)$,
$\hat{\rB}(\Ha)$, etc.

\medskip
\section{Comparison with the cyclic module of the algebra $\Ha$}
\label{sec2}

Let $\Ha$ be a cocommutative Hopf algebra.
In this section, we construct an injective cyclic module map
$\tau:\rB(\Ha)\to C(\Ha)$, from the cyclic bar complex
$\rB(\Ha)$ of \ref{map:t} %with $\rB_n(\Ha)=\Ha^{\otimes n}$,
to the canonical cyclic $k$-module $C(\Ha)$ %with $C_n(\Ha)=\Ha^{\otimes n+1}$
of the algebra underlying $\Ha$ (\cite[2.5.4]{lod}),
and a lift $c$ of $\tau$ to the negative cyclic complex $HN(\Ha)$ of $\Ha$.

%In the previous section, we have introduced a cyclic $k$-module
%structure on the bar complex $\rB(\Ha)$ of a Hopf algebra $\Ha$; we
%have $\rB_n(\Ha)=\Ha^{\otimes n}$. On the other hand, because $\Ha$ is
%an algebra, there is also a canonical cyclic $k$-module $C(\Ha)$ with
%$C_n(\Ha)=\Ha^{\otimes n+1}$ (\cite[2.5.4]{lod}).

In the group algebra case, these constructions are well-understood.
The cyclic module inclusion $\tau: \rB(k[G])\subset C(k[G])$ is given in
\cite[7.4]{lod}; see Example \ref{ex:tau} below. Goodwillie proved in
\cite{goo} that $\tau$ admits a natural lifting to a
chain map $c:\rB(k[G])\to HN(k[G])$ to the negative
cyclic complex, and that $c$ is unique up to natural homotopy. An
explicit formula for such a lifting was given by Ginot \cite{ginot},
in the normalized, mixed complex setting.

%In this section we show that both $\tau$ and its natural lifting $c$
% can be defined for general cocommutative Hopf algebras.

\subsection{A natural section of the projection
		$\mathrm HN(\cM'(\Ha))\to HH(\cM'(\Ha))$.}

Recall from Definition \ref{def:M(H)} that $\cM'=\cM'(\Ha)$ is a mixed
complex whose underlying chain complex is
$HH(\cM')\!=\!(\rE(\Ha)_\norm,\partial')$, and write $\pi'$ for the
projection from the negative cyclic complex $HN(\cM')$ to $HH(\cM')$.
%Write $HN(\cM'(\Ha))$ and $HH(\cM'(\Ha))\!=\!\rE(\Ha)_\norm$ for the
%negative cyclic and the Hochschild complex of the mixed complex
%$\cM'(\Ha)$, and $\pi':HN(\cM'(\Ha))\to HH(\cM'(\Ha))$ for the projection.
Following the method of Ginot \cite{ginot},
we shall define a natural $\Ha$-linear chain homomorphism
$\Upsilon':HH(\cM')\to HN(\cM')$ such that $\pi'\,\Upsilon'=1$.

We shall use a technical lemma about maps between chain complexes of
modules over a $k$-algebra $A$.  Assume given all of the following:
\begin{enumerate}
\item[(i)] A homomorphism of chain $A$-modules $f:C\to D$,
with $C_n=0$ for $n< n_0$.
\item[(ii)] A decomposition $C_n\cong A\otimes V_n$ for each $n$,
where $V_n$ is a $k$-module.
\item[(iii)] A $k$-linear chain contraction $s$ for $D$.
% i.e., maps $s_n:D_n\to D_{n+1}$ such that $1_D=sd+ds$.
\end{enumerate}

\begin{lem}\label{lem:kappa}
Given (i)--(iii), there is an $A$-linear chain contraction $\kappa^f$
of $f$, defined by $\kappa^f_{n_0}(av)=asf(v)$ and the inductive formula:
\[%begin{gather*}
%\kappa^f_{n_0}:C_{n_0}\to D_{n_0+1},\quad \kappa^f_{n_0}(av)=asf(v),\\
 \kappa^f_n:C_n=A\otimes V_n\to D_{n+1},\quad
	\kappa^f_n(av)=a\, s\left(f-\kappa^f_{n-1}d\right)(v).
\]%end{gather*}
% That is, $f=\kappa^f d+d\kappa^f$.
\end{lem}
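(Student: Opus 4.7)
The plan is to prove the lemma by a degreewise induction, starting at $n_0$ and using the two formulas in the statement as the inductive \emph{definition} of $\kappa^f$. At each stage the $A$-linearity of $\kappa^f_n$ is automatic, because in both formulas the scalar $a$ appears untouched as the leftmost factor and the inductively given $\kappa^f_{n-1}$ is already $A$-linear; so the real content is the homotopy identity $d\,\kappa^f_n + \kappa^f_{n-1}d = f_n$.

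For the base case $n=n_0$, since $C_{n_0-1}=0$ the identity reduces to $d\,\kappa^f_{n_0}=f_{n_0}$. I would compute
\[
d\,\kappa^f_{n_0}(av)=a\,ds\,f(v)=a(1-sd)f(v)=a\,f(v)-a\,s\,f(dv),
\]
using the chain-contraction identity $ds+sd=1_D$ and the chain-map property $df=fd$. In this degree $dv\in C_{n_0-1}=0$, so the second summand vanishes and the base case follows.

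For the inductive step, suppose $\kappa^f_m$ has been defined and satisfies the homotopy relation for every $m<n$. Applying $d$ to the defining formula and using $ds=1-sd$ gives
\[
d\,\kappa^f_n(av)=a(f-\kappa^f_{n-1}d)(v)-a\,s\,d(f-\kappa^f_{n-1}d)(v).
\]
The first term is exactly $f(av)-\kappa^f_{n-1}d(av)$, which is what we want, so the whole proof reduces to checking that the second term vanishes. Since $f$ is a chain map, $df=fd$, and by the inductive hypothesis applied in degree $n-1$,
\[
d\,\kappa^f_{n-1}\circ d=(f_{n-1}-\kappa^f_{n-2}d)\circ d=f_{n-1}d-\kappa^f_{n-2}d^{2}=f_{n-1}d,
\]
because $d^{2}=0$. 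Hence $d(f-\kappa^f_{n-1}d)(v)=fd(v)-fd(v)=0$, completing the induction.

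The argument is essentially formal and the only point that requires attention is the cancellation in the inductive step, where one must chase the identity $ds+sd=1$ together with $df=fd$ and, crucially, exploit $d^{2}=0$ to kill the residual term $\kappa^f_{n-2}d^{2}$ produced by the inductive hypothesis. This is the step I expect to be the main (mild) obstacle; once it is in place, $A$-linearity and the base case are essentially immediate.
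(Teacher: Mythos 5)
Your proposal is correct and follows essentially the same route as the paper: induction on degree, the base case from $dsf=f$ when $dv=0$, and the inductive step from $ds=1-sd$ together with the vanishing of $d(f-\kappa^f_{n-1}d)$ on $V_n$, which is exactly the paper's computation $f(dv)-d\kappa^f(dv)=\kappa^f(d\,dv)=0$ phrased slightly differently. The $A$-linearity observation is also handled the same way (the scalar $a$ is carried along untouched), so there is nothing to add.
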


\begin{proof}
We have to verify the formula $f(av)=\kappa^fd(av)+d\kappa^f(av)$
for $a\in A$ and $v\in V_n$. When $n=n_0$, this is easy
as $d(av)=0$ and $f(v)=dsf(v)$:
%\item{Step $1$:} The case $m=n_0$. For $v \in V_{n_0}$,
\[
d\kappa^f_{n_0}(av)=adsf(v)= af(v) = f(av).
\]
Inductively, suppose that the formula holds for $n\!-\!1$. %and that $v\in V_{n+1}$.
Since $dv\in C_{n-1}$, we have
\[
d(f-\kappa^f d)(v) = f(dv)-d\kappa^f(dv)=(\kappa^f d)(dv) = 0.
%d\kappa^f(dv) = (f-\kappa^f d)(dv) = f(dv) = df(v).
\]
%Hence $d(f-\kappa^f d) =0$ on $V_{n}$.
Using this, and the definition of $\kappa^f_{n}$, we compute:
\[
(d\kappa^f_{n})(v) = ds(f-\kappa^f d)(v) = (1-sd)(f-\kappa^f d)(v) =
(f-\kappa^f d)(v).
\]
Since $\kappa^f d(av)=\kappa^f(a\,dv)=a\kappa^f(dv)$ by construction,
\[
 d\kappa^f(av)+\kappa^f d(av)=ad\kappa^f(v)+a\kappa^f(dv)=af(v) = f(av).
%\qedhere
\quad\qed\]
\end{proof}\goodbreak

\begin{lem}
There is a sequence of $\Ha$-linear maps
$\Upsilon'^n:\rE(\Ha)\to \rE(\Ha)[2n]$, starting with $\Upsilon'^0=1$,
such that $B'(\Upsilon'^n\,\partial'-\partial'\Upsilon'^n)=0$.

They induce maps on the normalized complexes
$\Upsilon'^n:HH(\cM')\to HH(\cM')[2n]$.
\end{lem}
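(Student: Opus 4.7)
My approach is induction on $n$, using Lemma \ref{lem:kappa} at each stage to realize $\Upsilon'^{n+1}$ as an $\Ha$-linear chain homotopy witnessing a chain-map structure on $B'\Upsilon'^n$. In fact I would construct the $\Upsilon'^n$ satisfying the stronger identity
\[
\Upsilon'^n\,\partial' - \partial'\,\Upsilon'^n \;=\; B'\,\Upsilon'^{n-1} \qquad (n \ge 1),
\]
from which the stated condition follows at once by applying $B'$ and using $(B')^2 = 0$ on the normalized complex (a standard property of the mixed complex attached to a cyclic module, closely related to Remark \ref{rem:B'=B"}).

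The base case $\Upsilon'^0 = \mathrm{id}$ is trivial, as $\Upsilon'^0\partial' - \partial'\Upsilon'^0 = 0$. For the inductive step, assume $\Upsilon'^n$ has been built with the properties above. I would first verify that $B'\Upsilon'^n\colon \rE(\Ha)_\norm \to \rE(\Ha)_\norm[2n{+}1]$ is a chain map of (odd) degree $2n{+}1$; this uses $\Ha$-linearity of $B'$ (noted after \eqref{map:B'unnorm}) together with the mixed-complex identity $\partial' B' + B' \partial' = 0$ and the inductive hypothesis, via
\[
\partial'(B'\Upsilon'^n) + (B'\Upsilon'^n)\partial'
= -B'\partial'\Upsilon'^n + B'\Upsilon'^n\partial'
= B'\bigl(\Upsilon'^n\partial' - \partial'\Upsilon'^n\bigr) = (B')^2\,\Upsilon'^{n-1} = 0.
\]

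I would then apply Lemma \ref{lem:kappa} with $A = \Ha$, $C = \rE(\Ha)_\norm$ (so $C_m = \Ha\otimes V_m$ with $V_m = \bar\Ha^{\otimes m}$ for $\bar\Ha := \Ha/k{\cdot}1$), $D = \rE(\Ha)_\norm[2n{+}1]$, $f = B'\Upsilon'^n$, and the $k$-linear chain contraction induced by the extra degeneracy $s(x) = 1\otimes x$ of \eqref{map:s}. Two points of hygiene: the map $s$ sends degenerate elements to degenerate elements (the only new unit is inserted at position $0$, in the free $\Ha$-factor), so it descends to the normalized complex; and although on $\rE_0(\Ha)$ one only has $\partial' s + s\partial' = 1 - \eta\epsilon$, the shift by $2n{+}1 \ge 1$ ensures that every component of $D$ lives in positive degree of $\rE(\Ha)$, where $\partial' s + s\partial' = 1$ genuinely. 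Taking $\Upsilon'^{n+1} := \kappa^f$ (modulo a sign depending on how the shift is absorbed into the target differential), Lemma \ref{lem:kappa} delivers $\partial'\Upsilon'^{n+1} + \Upsilon'^{n+1}\partial' = B'\Upsilon'^n$, which, rewritten in the appropriate sign convention, is exactly the inductive identity at level $n{+}1$. The final clause about $\Upsilon'^n$ inducing maps on $HH(\cM')$ is automatic because the entire construction takes place on the normalized complex.

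The main obstacle is sign bookkeeping: Lemma \ref{lem:kappa} is stated for a degree-$0$ homomorphism $f$, whereas here $B'\Upsilon'^n$ has odd degree $2n{+}1$, so one must fold the shift into the target, reconcile the induced differential of $D = \rE(\Ha)_\norm[2n{+}1]$ with that of $\rE(\Ha)_\norm$, and check that the sign in the output identity $\partial'\Upsilon'^{n+1} + \Upsilon'^{n+1}\partial' = B'\Upsilon'^n$ matches the sign needed for the stronger inductive identity. Once this is handled, the induction closes cleanly.
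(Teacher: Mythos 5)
Your proof follows the paper's argument essentially to the letter: induct via Lemma \ref{lem:kappa} with $f=\pm B'\Upsilon'^n$ and the extra degeneracy $s$ of \eqref{map:s} as the $k$-linear contraction, and note that the stronger contraction identity $[\Upsilon'^n,\partial']=\pm B'\Upsilon'^{n-1}$ delivered by \ref{lem:kappa}, together with $[\partial',B']=0$ and $(B')^2=0$ on the normalized complex, both propagates the hypothesis and (as you point out) is what is really needed downstream in Lemma \ref{lem:Upsilon}.

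One point worth tightening: it is not true that every component of $D=\rE(\Ha)_\norm[2n{+}1]$ lies in positive $\rE$-degree (its component in homological degree $-(2n{+}1)$ is $\rE_0(\Ha)_\norm=\Ha$, where $[\partial',s]=1-\eta\epsilon\neq 1$), so $s$ is not literally a chain contraction of all of $D$, as hypothesis (iii) of Lemma \ref{lem:kappa} asks. The paper instead takes $D$ to be the good truncation $\rE(\Ha)[2n{+}1]_{\ge0}$, observes that any chain map out of $\rE(\Ha)$ must land there since $\rE(\Ha)$ vanishes in negative degrees, and checks that $-s$ genuinely contracts this truncated complex. Your version still works in substance, because the recursion of Lemma \ref{lem:kappa} only ever invokes the contraction in degrees $\ge n_0=0$, but truncating $D$ is the clean way to satisfy the hypothesis as stated and simultaneously dispose of the sign bookkeeping you flag: the differential on $D$ becomes $-\partial'$, the map $f=-B'\Upsilon'^n$ is a genuine degree-zero chain map $C\to D$, and the contraction condition comes out as $[\Upsilon'^{n+1},\partial']=f$ without any further reconciliation.
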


\begin{proof}
Inductively, we suppose we have constructed $\Upsilon'^{n}$ satisfying
$B'[\Upsilon'\,^n,\partial']=0$.
Now any chain map from $C=\rE(\Ha)$ to $\rE(\Ha)[2n+1]$
%$HH(\cM') \to HH(\cM')[2n+1]$
must land in the good truncation $D=\rE(\Ha)[2n+1]_{\ge0}$,
%$D=HH(\cM')[2n+1]_{\ge0}$,
and the $k$-linear
chain contraction $-s$ of \eqref{map:s} is also a contraction of $D$.
We claim that the $\Ha$-linear map $f=-B'\Upsilon'\,^{n}: C\to D$
is a chain map. Since the differential on $D$ is $-\partial'$,
the claim follows from:
\[
f(\partial')  - (-\partial')f
= -B'\Upsilon'^n\partial' - \partial'\,B'\Upsilon'^n
= B'\left( \partial'\Upsilon'^n - \Upsilon'^n\partial' \right)
= 0.
\]
We define $\Upsilon'\,^{n+1}$ to be the $\Ha$-linear chain
contraction of $f=-B'\Upsilon'\,^{n}$ given by the formulas in
Lemma \ref{lem:kappa}. That is,
\begin{equation}\label{map:upsiloni}
%\Upsilon'^0=1,\qquad
\Upsilon'\,^{n+1} := \kappa^f=\kappa^{-B'\Upsilon'\,^{n}}\qquad (n\ge0).
\end{equation}
The chain contraction condition $[\Upsilon'\,^{n+1},\partial']=f$ for
\eqref{map:upsiloni} implies that the inductive hypothesis
$B'[\Upsilon'\,^{n+1},\partial']=B'f=0$ holds.

Finally, note that the normalized mixed complex $HH(M')$ is a quotient of
$\rE(\Ha)$, and its terms have the form $HH(M')_n=\Ha\otimes W_n$ for a
quotient module $W_n$ of $V_n$. By naturality of $\kappa^f$ in $f$, the
above construction also goes through with $\rE(\Ha)$ replaced by $M'(\Ha)$,
and the maps $\Upsilon'$ on $\rE(\Ha)$ and $HH(M')$ are compatible.
\quad\qed\end{proof}

We define maps $\Upsilon':HH(\rE(\Ha))\to HN(\rE(\Ha))$ and
$\Upsilon':HH(\cM')\to HN(\cM')$ by
\begin{equation}\label{map:upsilont}
\Upsilon'=\sum\nolimits_{n=0}^\infty \Upsilon'\,^n. %:HH(\cM')\to HN(\cM').
\end{equation}
That is, %Identifying $HN(M')_i=\prod_{n=0}^\infty M'_{i+2n}$,
$\Upsilon'(x)$ is $(\cdots,\Upsilon'^{n}(x),\cdots,\Upsilon'^{1}(x),x)$.
\smallskip

\begin{lem}\label{lem:Upsilon}
The maps $\Upsilon'$ in \eqref{map:upsilont} are morphisms of
chain $\Ha$-modules, and $\pi'\,\Upsilon'=1$. Here $\pi'$ is the appropriate
canonical projection, either $HN(\rE(\Ha))\to HH(\rE(\Ha))$ or
$HN(\cM')\to HH(\cM')$.
\end{lem}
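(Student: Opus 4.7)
The plan is to assemble the identities already put in place in the construction of the $\Upsilon'^n$, splitting the claim into three pieces: the section property, the chain-map property, and $\Ha$-linearity.

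The section property $\pi'\Upsilon'=1$ is essentially tautological. By construction, the right-most entry of $\Upsilon'(x)=(\cdots,\Upsilon'^1(x),x)$ is $\Upsilon'^0(x)=x$, and $\pi'$ is precisely the projection onto this column-zero component. No further work is needed.

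For the chain-map property, I would decompose the total differential of $HN(\cM')$ into its two pieces: $\partial'$, acting internally within each column, and $B'$, carrying column $n{-}1$ to column $n$ with a sign determined by the $(b,B)$-bicomplex convention. Matching $d_{HN}(\Upsilon'(x))$ against $\Upsilon'(\partial' x)$ column by column then reduces the claim to the family of identities
\[
\partial'\Upsilon'^n \pm B'\Upsilon'^{n-1} = \Upsilon'^n\partial'\qquad (n\ge 0),
\]
with the convention $\Upsilon'^{-1}:=0$. For $n=0$ this is the trivial $\partial'=\partial'$; for $n\ge 1$ it is exactly the chain-contraction relation $d\kappa^f+\kappa^f d=f$ from Lemma \ref{lem:kappa}, applied with $\kappa^f=\Upsilon'^n$ and $f=-B'\Upsilon'^{n-1}$, once one accounts for the sign $-\partial'$ that the odd shift $[2n+1]$ puts on the differential of $D$ in the construction of $\Upsilon'^n$.

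$\Ha$-linearity then follows inductively: $\Upsilon'^0=1$ is $\Ha$-linear, and Lemma \ref{lem:kappa} builds $\Ha$-linear contractions out of $\Ha$-linear inputs, so each $\Upsilon'^n$ is $\Ha$-linear and so is $\Upsilon'$. The argument applies verbatim to both $\rE(\Ha)$ and its normalized quotient $\cM'$, with compatibility guaranteed by the naturality of $\kappa^f$ under the quotient map $\rE(\Ha)\to\cM'$ noted in the previous lemma. The one delicate point in carrying out this plan — and the only place where I anticipate having to think carefully — is the reconciliation of sign conventions: those of the shift $[k]$, of the total $(b,B)$-complex, and of the chain-contraction formula of Lemma \ref{lem:kappa}. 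This is bookkeeping rather than a genuine obstacle, since the minus sign that was deliberately inserted into $-B'\Upsilon'^{n-1}$ is exactly the one needed to absorb the signs coming from the shift and the total complex.
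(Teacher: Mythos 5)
Your proposal matches the paper's proof: the paper likewise reads off $\pi'\Upsilon'=\Upsilon'^{0}=1$ from the definition, reduces the chain-map property to the coordinate-wise identity $B'\Upsilon'^{n-1}+\partial'\Upsilon'^{n}-\Upsilon'^{n}\partial'=0$, which is precisely the chain-contraction relation built into \eqref{map:upsiloni}, and treats $\Ha$-linearity as immediate from the construction of each $\Upsilon'^{n}$ via Lemma \ref{lem:kappa}. Same decomposition, same key lemma; your remarks on signs and on compatibility with the normalized quotient are consistent with what the paper does implicitly.
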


\begin{proof}
It is clear that $\Upsilon'$ is $\Ha$-linear and that
$\pi'\Upsilon'=\Upsilon'^0=1$. To see that it is a chain map,
we observe that the $n$th coordinate of
$(B'+\partial')\Upsilon'-\Upsilon'\partial'$ is
$B'\Upsilon'\,^{n-1} +\partial'\Upsilon'\,^{n} - \Upsilon'\,^{n}\,\partial'$.
This is zero by the chain contraction condition for \eqref{map:upsiloni}.
\quad\qed\end{proof}

\begin{ex}\label{ex:Upsilon(1)}
Write $[1]^n$ for the element $1\otimes\cdots\otimes1$ of $k^{\otimes n}$.
By induction, we may check that
%\edit{same as $\Upsilon_0$ in \cite[2.4(ii)]{ginot}}
$\Upsilon'^n(1)=(-1)^n (2n)!/n!\,[1]^{2n+1}$.
Thus $\Upsilon'(1)=(0,\dots,0,1)$ in $HN(M')$.
\end{ex} %see \cite[(2.1.7.3)]{lod}

Recall from Definition \ref{def:M(H)} that $M=k\otimes_{\Ha}M'$,
and that $\rB(A)=k\otimes_A\rE(A)$.

\begin{cor}\label{cor:Upsilon}
There are morphisms of chain $k$-modules,
$\Upsilon\!:HH(\rB(\Ha))\to HN(\rB(\Ha))$ and
$\Upsilon\!:HH(\cM)\to HN(\cM)$, defined by
\[
\Upsilon=\sum\nolimits_{n=0}^\infty 1_k\otimes_\Ha\Upsilon'^n,
\]
and $\pi\Upsilon=1$. Here $\pi$ is
the appropriate projection $\pi\!:HN\smap{} HH$.
\end{cor}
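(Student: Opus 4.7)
The plan is to obtain $\Upsilon$ simply by applying the functor $k \otimes_\Ha -$ componentwise to the $\Ha$-linear maps $\Upsilon'^n$ of Lemma \ref{lem:Upsilon}. Recall from Definition \ref{def:M(H)} that $\rB(\Ha) = k \otimes_\Ha \rE(\Ha)$ and $\cM = k \otimes_\Ha \cM'$. Since each $\Upsilon'^n : \rE(\Ha) \to \rE(\Ha)[2n]$ (respectively $\cM' \to \cM'[2n]$) is $\Ha$-linear, we may form $1_k \otimes_\Ha \Upsilon'^n$, which is a $k$-linear map of degree $2n$ between the appropriate complexes of $k$-modules.

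Next I would define $\Upsilon$ componentwise into the negative cyclic complex. In degree $q$, a class $x \in HH(\cM)_q = k \otimes_\Ha \cM'_q$ is sent to the sequence whose $n$th coordinate is $(1_k \otimes_\Ha \Upsilon'^n)(x) \in k \otimes_\Ha \cM'_{q+2n}$, i.e.
\begin{equation*}
\Upsilon(x) = \bigl( \ldots, (1_k \otimes_\Ha \Upsilon'^n)(x), \ldots, (1_k \otimes_\Ha \Upsilon'^1)(x), x \bigr) \in HN(\cM)_q.
\end{equation*}
The same recipe (with $\cM'$ replaced by $\rE(\Ha)$) defines $\Upsilon : HH(\rB(\Ha)) \to HN(\rB(\Ha))$. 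Note that writing $\Upsilon$ as a sum $\sum_n 1_k \otimes_\Ha \Upsilon'^n$ is purely notational for this coordinatewise assembly; there is no convergence or tensor-product-versus-product subtlety to worry about, because each coordinate is a single well-defined $k$-linear map.

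The chain map property is then immediate by functoriality: Lemma \ref{lem:Upsilon} gives the identity $B'\Upsilon'^{n-1} + \partial'\Upsilon'^{n} - \Upsilon'^{n}\partial' = 0$ in the $n$th coordinate of $(B'+\partial')\Upsilon' - \Upsilon'\partial'$, and applying the additive functor $k \otimes_\Ha -$ (which sends $\partial', B'$ to $\partial, B$) preserves this relation coordinatewise. Hence $(B+\partial)\Upsilon = \Upsilon\partial$, so $\Upsilon$ is a chain map. Finally, the projection $\pi : HN(\cM) \to HH(\cM)$ is obtained from $\pi' : HN(\cM') \to HH(\cM')$ by applying $k \otimes_\Ha -$, and the identity $\pi\Upsilon = 1$ is obtained in the same manner from $\pi'\Upsilon' = 1$ (Lemma \ref{lem:Upsilon}).

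The only conceptual step requiring any care is the first one, namely observing that $\Ha$-linearity of the $\Upsilon'^n$ is exactly what is needed to descend them through $k \otimes_\Ha -$, and that the coordinatewise assembly avoids any issue about $k \otimes_\Ha$ commuting with the infinite product underlying $HN$. Everything else is formal transport along the additive functor $k \otimes_\Ha -$, so I do not expect any substantial obstacle.
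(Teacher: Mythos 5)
Your proof is correct and takes essentially the same route the paper intends: the corollary is stated without explicit proof precisely because it follows by applying the additive functor $k\otimes_\Ha -$ to the $\Ha$-linear maps of Lemma \ref{lem:Upsilon}, assembling the result coordinatewise into $HN$. Your remark that the coordinatewise definition sidesteps any issue of $k\otimes_\Ha -$ failing to commute with the infinite product underlying $HN$ is a genuine (if minor) point of care that the paper leaves implicit.
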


\subsection{The lift $HH(\rB(\Ha))\map{c} HN(\Ha)$}

%Let $\Ha$ be a Hopf algebra, and
Recall that $C(\Ha)$ denotes the canonical cyclic complex of the algebra
underlying $\Ha$ (\cite[2.5.4]{lod}). We set $\tau_0=\eta:k\to\Ha$.

%we define a $k$-linear map
%\begin{gather}
%\tau:\rB(\Ha)\to C(\Ha)\label{map:tau}\\
%\tau(h_1\otimes\dots\otimes h_n)=S(h_1^{(0)}\dots h_n^{(0)})\otimes
%h_1^{(1)}\otimes\dots\otimes h_{n-1}^{(1)}\nonumber
%\end{gather}

\begin{lem}\label{map:tau}
Let $\Ha$ be a cocommutative Hopf algebra. Then the $k$-linear map
\begin{gather*}
\tau:\rB(\Ha)\to C(\Ha)\\
\tau(h_1\otimes\dots\otimes h_n)=S(h_1^{(0)}\dots h_n^{(0)})\otimes
h_1^{(1)}\otimes\dots\otimes h_{n}^{(1)},\quad n>0,\nonumber
\end{gather*}
is an injective homomorphism of cyclic $k$-modules.
It induces an injection of the associated mixed complexes,
$M(\Ha)\hookrightarrow C(\Ha)_\norm$.
\end{lem}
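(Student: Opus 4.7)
The plan is to verify the statement in three parts: that $\tau$ respects the simplicial structure, that it commutes with the cyclic operator, and that it is injective on both the cyclic modules and the associated normalized mixed complexes.

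Injectivity is simplest, and I will establish it via a $k$-linear retraction $r\colon C_n(\Ha)\to\rB_n(\Ha)$ given by $r(a_0\otimes a_1\otimes\cdots\otimes a_n)=\epsilon(a_0)(a_1\otimes\cdots\otimes a_n)$. Applied to $\tau(h_1\otimes\cdots\otimes h_n)$, the counit axiom $\epsilon(h_j^{(0)})h_j^{(1)}=h_j$ (together with $\epsilon\circ S=\epsilon$) gives $r\tau=\mathrm{id}$. A short case check on $r\circ s_i$, using the explicit form of the degeneracies on $\rB(\Ha)$ (obtained by specialising those of $\rE(\Ha)$ at $h_0=1$), shows that $r$ carries degenerate elements to degenerate elements, so the same retraction yields injectivity for the normalized complexes once $\tau$ is known to be cyclic.

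For the simplicial identities I will check the face maps case by case. The degeneracies and the inner faces $d_i$ for $0<i<n$ are routine, using $\Delta(1)=1\otimes 1$, $\epsilon(1)=1$, and the fact that $\Delta$ is an algebra homomorphism so that $(h_ih_{i+1})^{(0)}=h_i^{(0)}h_{i+1}^{(0)}$ and analogously in the second slot. The face $d_0$ is handled by the antipode expansion $S(H)=S(h_n^{(0)})\cdots S(h_1^{(0)})$ combined with the antipode axiom $S(h_1^{(0)})h_1^{(1)}=\epsilon(h_1)$, giving $S(H)h_1^{(1)}=\epsilon(h_1)S(h_2^{(0)}\cdots h_n^{(0)})$, which matches $\tau\partial_0$. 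For the wraparound face $d_n$ I will additionally invoke cocommutativity, $h_n^{(0)}\otimes h_n^{(1)}=h_n^{(1)}\otimes h_n^{(0)}$, before applying $h_n^{(1)}S(h_n^{(0)})=h_n^{(0)}S(h_n^{(1)})=\epsilon(h_n)$.

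The main obstacle is cyclic equivariance, $\tau\,t_{\rB}=t_C\,\tau$. Setting $h_0=1$ in the formula of Corollary \ref{map:t} and reducing modulo the $\Ha$-action by applying $\epsilon$ to the leading slot yields
\[
t_{\rB}(h_1\otimes\cdots\otimes h_n)=(-1)^n\,S(h_1^{(0)}\cdots h_{n-1}^{(0)}h_n)\otimes h_1^{(1)}\otimes\cdots\otimes h_{n-1}^{(1)},
\]
which, after rewriting $h_n=\epsilon(h_n^{(1)})h_n^{(0)}$, takes the symmetric form $(-1)^n\epsilon(h_n^{(1)})\,S(H)\otimes h_1^{(1)}\otimes\cdots\otimes h_{n-1}^{(1)}$ with $H=h_1^{(0)}\cdots h_n^{(0)}$, while on the other side $t_C\tau(h_1\otimes\cdots\otimes h_n)=(-1)^n\,h_n^{(1)}\otimes S(H)\otimes h_1^{(1)}\otimes\cdots\otimes h_{n-1}^{(1)}$. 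Applying $\tau$ to the symmetric form of $t_{\rB}$ and expanding $\Delta S(H)=(S\otimes S)\Delta(H)$ (valid by cocommutativity) together with the coproducts of each $h_j^{(1)}$, I will collapse the nested $S$ and $\Delta$ terms in the leading slot by repeated use of the antipode axiom and coassociativity, producing $h_n^{(1)}$ there and matching $t_C\tau$. Bookkeeping with iterated Sweedler indices is the delicate part, but no new principle intervenes beyond the cocommutativity and antipode manipulations already used in Remark \ref{rem:B'=B"} and in the proof of Lemma \ref{map:B'}.
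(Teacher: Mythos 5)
Your proposal is correct and fills in, carefully and plausibly, exactly the "straightforward, short calculations" the paper leaves implicit; the face-map checks (including cocommutativity for $d_n$) and the reduction of the cyclic identity to the form obtained by setting $h_0=1$ in Corollary \ref{map:t} are all sound. The only small variation is your injectivity argument: the paper points to the antipode axiom $(Sh^{(0)})h^{(1)}=\eta\epsilon(h)$, whereas your retraction $r=\epsilon\otimes\mathrm{id}^{\otimes n}$ needs only the counit axiom together with $\epsilon\circ S=\epsilon$, which is slightly more economical and, as you note, passes cleanly to normalized complexes.
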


\begin{proof} One has to check that $\tau$ commutes with the face,
degeneracy and cyclic operators; these are all straightforward, short
calculations. The fact that the maps are injective follows from the
antipode identity $(Sh^{(0)})h^{(1)}=\eta\epsilon(h)$.
\quad\qed\end{proof}

\begin{ex}\label{ex:tau}
If $g_1,\dots,g_n\in\Ha$ are grouplike, then
\[
\tau(g_1\otimes\dots\otimes g_n)=(g_1\dots g_n)^{-1}\otimes
g_1\otimes\dots\otimes g_n.
\]
Thus for $\Ha=k[G]$, the $\tau$ of \ref{map:tau} is the map
$k[\rB(G,1)]\hookrightarrow HH(k[G])$ of \cite[7.4.5]{lod}.
\end{ex}

\smallskip\goodbreak
%\begin{defn}
We define $c: \rB(\Ha)\to HN(\Ha)$ to be the natural chain map
\begin{equation}\label{map:c}
c:\rB(\Ha) %\twoheadrightarrow \rB(\Ha)_\norm
\map{\Upsilon} HN(\rB(\Ha)) \map{\tau} HN(\Ha).
\end{equation}
We will also write $c$ for the normalized version
$HH(M)\to HN(\Ha)_\norm$ of this map.
%\end{defn}

\begin{thm}\label{thm:gwlift}
The following diagram commutes
\[
\xymatrix{& HN(\Ha)\ar[d]^\pi\\
\rB(\Ha)\ar[r]_\tau\ar[ur]^{c}& HH(\Ha).}
\]
\end{thm}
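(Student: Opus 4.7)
The plan is to observe that the commutativity of the diagram is essentially a formal consequence of two facts already established: that $\tau$ is a map of cyclic modules (Lemma \ref{map:tau}) and that $\Upsilon$ is a section of the canonical projection (Corollary \ref{cor:Upsilon}).

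More precisely, I would argue as follows. By Lemma \ref{map:tau}, $\tau$ induces a morphism of mixed complexes $M(\Ha)\hookrightarrow C(\Ha)_{\norm}$, and hence a morphism of the associated negative cyclic complexes $\tau_{HN}:HN(\rB(\Ha))\to HN(\Ha)$ compatible with the canonical projections to Hochschild homology. That is, the square
\[
\xymatrix{
HN(\rB(\Ha))\ar[r]^{\tau_{HN}}\ar[d]_{\pi} & HN(\Ha)\ar[d]^{\pi}\\
HH(\rB(\Ha))\ar[r]_{\tau_{HH}} & HH(\Ha)
}
\]
commutes, since $\pi$ is natural with respect to maps of mixed complexes. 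Here $HH(\rB(\Ha))=\rB(\Ha)$ (or its normalized version) and $\tau_{HH}=\tau$.

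Unravelling the definition \eqref{map:c} of $c=\tau_{HN}\circ\Upsilon$ and applying the square,
\[
\pi\circ c \;=\; \pi\circ \tau_{HN}\circ \Upsilon \;=\; \tau\circ \pi\circ \Upsilon \;=\; \tau,
\]
where the last equality uses $\pi\circ\Upsilon=1$ from Corollary \ref{cor:Upsilon}. This is exactly the commutativity of the displayed triangle.

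There is no substantive obstacle: the only thing to be careful about is to verify that $\tau$ genuinely induces a map of negative cyclic complexes (not merely of Hochschild complexes), which is immediate because Lemma \ref{map:tau} asserts that $\tau$ is a cyclic module homomorphism, and the formation of $HN$ is functorial on cyclic modules (equivalently, on mixed complexes). One should also remark that the same argument works verbatim in the normalized setting, giving the commutativity of the analogous triangle with $\rB(\Ha)$ replaced by $HH(\cM)$ and $HN(\Ha)$ replaced by $HN(\Ha)_{\norm}$.
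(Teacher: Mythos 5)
Your proof is correct and is essentially the same as the paper's: both use that $\tau$ is a morphism of cyclic (hence mixed) modules, so $\pi$ commutes with the induced map on $HN$, together with $\pi\Upsilon=1$ from Corollary \ref{cor:Upsilon}, to get $\pi c=\pi\tau\Upsilon=\tau\pi\Upsilon=\tau$. The paper compresses this into one line, while you make the naturality square explicit, but the argument is identical.
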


\begin{proof}
By \eqref{map:c}, Lemma \ref{map:tau} and Corollary \ref{cor:Upsilon},
$\pi\,c = \pi\tau\Upsilon=\tau\pi\Upsilon=\tau$.
\quad\qed\end{proof}

\begin{ex}\label{Gmap:c}
Goodwillie proved in \cite[II.3.2]{goo} that, up to chain homotopy,
there is a unique chain map $\rB(k[G])\to HN(k[G])$ lifting $\tau$,
natural in the group $G$.  Ginot \cite{ginot} has given explicit
formulas for one such map; it follows that Ginot's map is naturally
chain homotopic to the map $c$ constructed in \eqref{map:c} for $\Ha=k[G]$.
\end{ex}

\subsection{Passage to completion}\label{subsec:passage}

If $A$ is a filtered algebra, the induced filtration \eqref{filotimes}
%and $\hat{A}$ its completion with respect to a two-sided ideal $I$.
%The filtration of $A$ by the powers of $I$ induces a filtration
on the canonical cyclic module $C(A)$ makes it a cyclic filtered module.
Passing to completion we obtain a cyclic module $C^{\top}(\hat{A})$ with
$C^{\top}_n(\hat{A})=\hat{A}^{\hotimes n+1}$. In the spirit of subsection
\ref{subsec:adic},  we write $HH^{\top}(\hat{A})$, $HN^{\top}(\hat{A})$, etc.\
for the Hochschild and cyclic complexes etc.\ of the %normalized
mixed complex associated to $C^{\top}(\hat{A})$.

In particular this applies if $A=\Ha$ is a cocommutative Hopf algebra,
equipped with an $I$-adic filtration, where $I$ is an ideal and coideal
of $\Ha$ with $\epsilon(I)=0$, closed under the antipode $S$. Write
$\haha$ for the associated complete Hopf algebra.

It is clear from the formula in Lemma \ref{map:tau} that $\tau$ is a
morphism of cyclic filtered modules. Hence it induces continuous maps
$\hat{\tau}$ between the corresponding complexes for $HH$, $HN$, etc.

\begin{prop}
The map $c$ of \eqref{map:c} induces a continuous map $\hat{c}$ which
fits into a commutative diagram
\[
\xymatrix{& HN^{\top}(\haha)\ar[d]^\pi\\
\rB^{\top}(\haha)\ar[r]_{\hat{\tau}}\ar[ur]^{\hat{c}}& HH^{\top}(\haha).}
\]
\end{prop}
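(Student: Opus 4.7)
The plan is to show that every map entering the definition of $c$ already respects the $I$-adic filtrations on the relevant complexes, so that $c$ extends by continuity to completions and the commutativity of the uncompleted diagram of Theorem \ref{thm:gwlift} carries over.

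First I would check that the section $\Upsilon':HH(\cM')\to HN(\cM')$ of Lemma \ref{lem:Upsilon} is filtration-preserving. The inductive construction of $\Upsilon'^{n+1}=\kappa^{-B'\Upsilon'^n}$ is built, via Lemma \ref{lem:kappa}, out of three pieces: the differential $\partial'$, the operator $B'$, and the extra degeneracy $s(x)=1\otimes x$ of \eqref{map:s}. We already know from subsection \ref{subsec:adic} that $\partial'$ and (through the formula for $t$ in Corollary \ref{map:t}) the cyclic operator $t$ preserve the filtration on $\rE(\Ha)$; since $B'$ is built from $t$, $s$ and the norm map, it too is filtration-preserving. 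Finally $s(x)=1\otimes x$ preserves the filtration because $1\in\cF_0(\Ha)$ and the tensor-product filtration \eqref{filotimes} is additive in the factors. Hence each $\Upsilon'^n$, being assembled inductively from these ingredients, preserves filtrations, and so does $\Upsilon'=\sum_n\Upsilon'^n$.

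Quotienting by $\Ha$ gives that $\Upsilon:HH(\cM)\to HN(\cM)$ of Corollary \ref{cor:Upsilon} is filtration-preserving; likewise $\tau$ is filtration-preserving, as noted just before the proposition. Consequently the composition $c=\tau\Upsilon$ of \eqref{map:c} is a morphism of filtered complexes, and by passage to the inverse limit it induces a continuous map
\[
\hat{c}:\rB^{\top}(\haha)\to HN^{\top}(\haha),
\]
where $\hat{c}=\hat{\tau}\,\hat{\Upsilon}$ with $\hat{\Upsilon}$ the completion of $\Upsilon$. Using \eqref{hatotimes}, one identifies the targets with the topological versions $HN^{\top}(\haha)$, $HH^{\top}(\haha)$, $\rB^{\top}(\haha)$ defined in subsection \ref{subsec:passage}.

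For commutativity of the completed triangle, $\pi$ is induced termwise on $HN$ and so commutes with completion. Applying the completion functor to the commutative diagram of Theorem \ref{thm:gwlift} gives
\[
\pi\,\hat{c}=\pi\,\hat{\tau}\,\hat{\Upsilon}=\hat{\tau}\,\pi\,\hat{\Upsilon}=\hat{\tau},
\]
since $\pi\,\hat{\Upsilon}=1$ by continuity of $\pi\Upsilon=1$. The only real obstacle is the filtration-preservation of $\Upsilon'$; once that is verified for the inductive step, the rest is formal naturality of completion.
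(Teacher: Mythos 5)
Your proposal is correct and follows essentially the same route as the paper: show that $s$, $B'$ and hence, via the recursion of Lemma \ref{lem:kappa} and \eqref{map:upsiloni}, each $\Upsilon'^n$ (and so $\Upsilon'$, $\Upsilon$ and $c=\tau\Upsilon$) is a filtered morphism, then pass to completions and deduce commutativity from Theorem \ref{thm:gwlift}. The only cosmetic difference is that you phrase $B'$ as built from $t$, $s$ and $N$ (it actually uses the last degeneracy $s_n$ rather than the extra degeneracy $s$, but all degeneracies are filtered since $\rE(\Ha)$ is a simplicial filtered module), which does not affect the argument.
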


\begin{proof}
It suffices to show that $\Upsilon$ (and hence $c$) is a filtered
morphism.  We observed in \ref{subsec:adic} that $\rE(\Ha)$ is a
cyclic filtered module. Similarly, it is clear that $s$, $s'$ and $B'$
are filtered morphisms from their definitions in \eqref{map:s},
\eqref{map:s'} and \eqref{map:B'unnorm}. The recursion formulas in
Lemma \ref{lem:kappa} and \eqref{map:upsiloni} show that each
$\Upsilon'^{n}$ is filtered, whence so are $\Upsilon'$ and $\Upsilon$,
as required.
\quad\qed\end{proof}

\section{The case of universal enveloping algebras of Lie algebras}
\label{sec3}

Let $\frakg$ be a Lie algebra over a commutative ring $k$. Then the
enveloping algebra $U\frakg$ is a cocommutative Hopf algebra, so
the constructions of the previous sections apply to $U\g$. In
particular a natural map $c:\rB(U\g)\to HN(U\g)$ was constructed in
\eqref{map:c}. In this section, we show that the Loday-Quillen map
\[
\wedge\g\map{\theta} C^\lambda(U\g)[-1] \map{B} HN(U\g)
\]
factors through $c$ up to chain homotopy. (See Theorem \ref{thm:theta=c}.)

\subsection{Chevalley-Eilenberg complex}
The Chevalley-Eilenberg resolution of $k$ as a $U\g$-module has the
form $(U\g\otimes\wedge \frakg,d')$, and is given in \cite[7.7]{chubu}.
Tensoring it over $U\frakg$ with $k$, we obtain a complex
$(\wedge\frakg,d)$.  Kassel showed in \cite[8.1]{kas} that the
anti-symmetrization map
\begin{gather}
e:\wedge^n\frakg\to (U\frakg)^{\otimes n}\label{map:e}\\
e(x_1\land\dots\land x_n)=\sum_{\sigma\in\fS_n}\sg(\sigma)
x_{\sigma(1)}\otimes\dots\otimes x_{\sigma(n)}\nonumber
\end{gather}
induces chain maps $e:\wedge\frakg\to \rB(U\frakg)$ and
$1\otimes e:U\g\otimes\wedge\frakg\to \rE(U\frakg)$, because
$ed=\partial e$ and $(1\otimes e) d'=\partial'(1\otimes e)$.
Moreover, $e$ and $1\otimes e$ are quasi-isomorphisms; see \cite[8.2]{kas}.

%\begin{lem}\label{exa:Bprim=0}
%Let $x_1,\dots,x_n\in\frakg$ and $h\in U\frakg$. Then
%\[
%B'(h\otimes x_1\otimes\dots\otimes x_n)=0
%\text{ in }\rE_{n+1}(U\frakg)_{\norm}.
%\]
%\end{lem}

%\begin{proof}
%This is Corollary \ref{B'(prim)}.
%In fact, as the $x_i$ are primitive, each of these is degenerate,
%and so it vanishes in $\rE(U\g)_\norm$.
%\end{proof}

\begin{lem}\label{map:psi} %was {upsilon}
The map $\psi': U\g\otimes\wedge\g \to HN(\cM'(U\g))$ defined by
\[ \psi'(x)=(\dots,0,0,0,1\otimes e(x)) \]
is a morphism of chain $U\g$-modules, and the map
$\psi\!:\!\wedge\g\to HN(\cM(U\g))$ defined by
\[ \psi(x)=(\dots,0,0,0,e(x)) \]
is a morphism of chain $k$-modules.
\end{lem}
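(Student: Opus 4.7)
The plan is to unpack the definition of the differential on $HN(\cM'(U\g))$ and reduce the chain map property to two identities: compatibility of $e$ with the Hochschild differential (already established by Kassel) and the vanishing of $B'$ on the image of $1\otimes e$.

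Recall that an element of $HN(\cM')$ of total degree $n$ is a tuple $(\ldots, y_2, y_1, y_0)$ with $y_i \in \rE(U\g)_\norm$ of degree $n+2i$, and the total differential is $(y_i) \mapsto (\partial' y_i + B' y_{i-1})$. Since $\psi'(x)$ is supported in the rightmost coordinate, the chain map condition $(\partial' + B')\psi'(x) = \psi'(d' x)$ reduces to verifying, for $x \in U\g \otimes \wedge^n \g$:
\begin{enumerate}
\item $\partial'(1\otimes e)(x) = (1\otimes e)(d' x)$ in the rightmost slot,
\item $B'(1 \otimes e)(x) = 0$ in the next slot (and all higher slots are automatic).
\end{enumerate}

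The first identity is exactly Kassel's theorem \cite[8.2]{kas}, quoted in the paragraph following equation \eqref{map:e}. The second identity is where I would invoke the main technical input from the previous section. By definition of $e$, every element $(1\otimes e)(u\otimes x_1\wedge\cdots\wedge x_n)$ is a signed sum of terms of the form $u \otimes x_{\sigma(1)} \otimes \cdots \otimes x_{\sigma(n)}$ with each $x_{\sigma(i)} \in \g$, and every element of $\g$ is primitive in $U\g$. Therefore Corollary \ref{B'(prim)} applies termwise, and $B'(1\otimes e)(x) = 0$ in $\rE(U\g)_\norm$. The $U\g$-linearity of $\psi'$ is obvious from the formula, since the $U\g$-action on $\rE(U\g)$ is on the left tensor factor, which is where $1\otimes e$ stores its $U\g$-component.

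For the statement about $\psi$, I would simply observe that $\psi = 1_k \otimes_{U\g} \psi'$ under the identifications $\wedge\g = k \otimes_{U\g} (U\g \otimes \wedge \g)$ and $HN(\cM) = k \otimes_{U\g} HN(\cM')$; applying the right-exact functor $k \otimes_{U\g} -$ to the $U\g$-linear chain map $\psi'$ yields the $k$-linear chain map $\psi$. There is no real obstacle here: the only nontrivial ingredient is the vanishing $B' \circ (1 \otimes e) = 0$, which is immediate from Corollary \ref{B'(prim)} once one notices that the image of $e$ consists of tensors of primitive elements.
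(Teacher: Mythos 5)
Your proof is correct and takes essentially the same route as the paper: the only inputs are Kassel's identity $\partial'(1\otimes e)=(1\otimes e)d'$ and the vanishing $B'(1\otimes e)=0$ from Corollary \ref{B'(prim)}, which the paper packages by saying that $1\otimes e$ and $e$ are morphisms of mixed complexes when the sources are given the trivial Connes operator. One tiny caveat on your treatment of $\psi$: since $HN$ is built from infinite products, write the comparison as the natural map $k\otimes_{U\g}HN(\cM')\to HN(\cM)$ composed with $1_k\otimes_{U\g}\psi'$ (or argue directly with $\partial e=ed$ and $Be=0$ in $\cM$), rather than asserting $HN(\cM)=k\otimes_{U\g}HN(\cM')$; this does not affect the argument.
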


\begin{proof}
Consider $U\g\otimes\wedge\g$ and $\wedge\g$ as mixed
complexes with trivial Connes' operator. By Corollary \ref{B'(prim)},
$B'(1\otimes e)=0$ in $M'(U\g)$. Thus
both $1\otimes e$ and $e$ induce morphisms of mixed complexes
$U\g\otimes\wedge\g\to \cM'(U\g)$ and $\wedge\g\to\cM'(U\g)$.
\quad\qed\end{proof}

\begin{lem}\label{lem:ce=taupsi} %was {lem:upsitopic}
The diagrams
\[\xymatrix{
U\g\otimes\wedge\g\ar[d]_{1\otimes e}\ar[r]^-{\psi'}
 & HN(\cM'(U\g)) \\
 \rE(U\g)\ar[ur]_{\Upsilon'}
} \qquad \xymatrix{
\wedge\g\ar[d]_{e}\ar[r]^-{\psi}
 & HN(\cM(U\g))\ar[d]^\tau \\
 \rB(U\g)\ar[ur]^{\Upsilon}\ar[r]_-c & HN(U\g)_\norm
} \]
commute up to natural $U\g$-linear (resp., natural $k$-linear) chain homotopy.
%\edit{used in T.\ref{thm:theta=c}}
\end{lem}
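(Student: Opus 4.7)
Plan. Both $\Upsilon'\circ(1\otimes e)$ and $\psi'$ are $U\g$-linear lifts of the chain map $1\otimes e$ along the projection $\pi:HN\to HH$, since $\pi\Upsilon'=1$ by Lemma~\ref{lem:Upsilon} and the bottom coordinate of $\psi'$ is $1\otimes e$ by construction. The fact that $\psi'$ is a chain map at all hinges on Corollary~\ref{B'(prim)}: since each summand of $e(y)$ is a tensor of primitives, $B'(1\otimes e(y))=0$, which kills the only component of the $HN$-differential of $\psi'(y)$ that is not accounted for by $(1\otimes e)d'$. The homotopy to be constructed is essentially forced by this vanishing.

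For the first diagram, I build the $U\g$-linear chain homotopy $H=(H^0,H^1,H^2,\dots)$ between $\Upsilon'\circ(1\otimes e)$ and $\psi'$ inductively in its components. The equation $d_{HN}H + Hd' = \Upsilon'(1\otimes e) - \psi'$ decomposes into $\partial' H^0+H^0 d'=0$ (solved by $H^0:=0$) and, for $k\geq 1$,
\[
\partial' H^k+H^k d' \;=\; \Upsilon'^{\,k}(1\otimes e)-B'H^{k-1} \;=:\; f_k.
\]
I define $H^k:=\kappa^{f_k}$ via Lemma~\ref{lem:kappa}: the lemma applies because $U\g\otimes\wedge\g$ has the free form $U\g\otimes V_n$ with $V_n=\wedge^n\g$, and the extra degeneracy $s$ of \eqref{map:s} descends to a $k$-linear contraction of $\cM'(U\g)$ which is strict in positive degrees; the image of $f_k$ lies in $\cM'(U\g)_{\geq 2k}$, which is in positive degrees for $k\geq 1$.

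The required chain-map property of $f_k$ is the main verification. Expanding $\partial' f_k-f_k d'$ using the componentwise identity $\partial'\Upsilon'^{\,k}-\Upsilon'^{\,k}\partial'=-B'\Upsilon'^{\,k-1}$ (extracted from $d_{HN}\Upsilon'=\Upsilon'\partial'$ in Lemma~\ref{lem:Upsilon}), the mixed-complex relation $\partial'B'+B'\partial'=0$, and the inductive hypothesis $\partial' H^{k-1}+H^{k-1}d'=f_{k-1}$, the difference collapses to $-B'\Upsilon'^{\,k-1}(1\otimes e)+B'f_{k-1}$. For $k=1$ this is $-B'(1\otimes e)=0$ by Corollary~\ref{B'(prim)}; for $k\geq 2$ it vanishes by $(B')^2=0$. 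Naturality of $H$ in $\g$ is automatic from the naturality of each ingredient ($\Upsilon'^{\,k}$, $e$, $\kappa$, $B'$).

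The second diagram follows by applying $k\otimes_{U\g}(-)$ to $H$, which produces a natural $k$-linear chain homotopy between $\Upsilon\circ e$ and $\psi$. Composing this homotopy with the cyclic chain map $\tau$ of Lemma~\ref{map:tau} yields the second homotopy $c\circ e=\tau\Upsilon e\simeq\tau\psi$, commuting the outer part of the diagram. The main obstacle is the inductive chain-map check for $f_k$; this check is driven entirely by $B'(1\otimes e)=0$ (the very input that makes $\psi'$ a chain map) together with $(B')^2=0$, so the pattern of cancellations is identical at every level of the induction.
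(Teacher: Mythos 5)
Your argument is correct, and it reaches the conclusion by a genuinely different decomposition than the paper's. The paper also reduces to the left-hand diagram and also rests on Lemma~\ref{lem:kappa}, but it applies that lemma \emph{once}, to the total difference $f=\Upsilon'(1\otimes e)-\psi'$: using Example~\ref{ex:Upsilon(1)} (i.e.\ $\Upsilon'(1)=\psi'(1)$ in the normalized complex) it observes that $f$ factors through $HN(N)$ for the mixed subcomplex $N\subset\cM'(U\g)$ with $N_0=0$, $N_1=\ker\partial'$, and then contracts $HN(N)$ by the explicit $k$-linear homotopy $\phi=\sum_n(-1)^n(sB')^ns$, so that $\kappa^f$ is the desired $U\g$-linear homotopy. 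You instead solve the homotopy equation coordinate by coordinate in the $HN$-direction, applying Lemma~\ref{lem:kappa} once per coordinate with target the (even-shifted, truncated) complex $\cM'(U\g)$ contracted by $s$; your inductive chain-map check for $f_k=\Upsilon'^{\,k}(1\otimes e)-B'H^{k-1}$ is exactly right, with $k=1$ handled by Corollary~\ref{B'(prim)} and $k\ge2$ by $(B')^2=0$ together with the componentwise identity for $\Upsilon'$. What your route buys: you never need Example~\ref{ex:Upsilon(1)} nor a contraction of a negative cyclic complex (the paper leaves the verification that $\phi$ contracts $HN(N)$ to the reader), and your induction mirrors the construction of $\Upsilon'$ itself; what the paper's route buys is a one-shot construction with the whole homotopy packaged as a single $\kappa^f$. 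Two small presentational points for your write-up: to invoke Lemma~\ref{lem:kappa} literally you should take the target of $f_k$ to be the good truncation of the shifted complex (as the paper does when constructing $\Upsilon'^{\,n+1}$), noting that $f_k$ lands there automatically because its degree-zero image consists of cycles and that $s$ contracts it since $2k>0$; and when you apply $k\otimes_{U\g}(-)$, do it coordinatewise (as in Corollary~\ref{cor:Upsilon}), since the negative cyclic complex is an infinite product and does not commute with $k\otimes_{U\g}(-)$ on the nose.
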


\begin{proof} By \ref{cor:Upsilon}, \ref{map:psi} and \eqref{map:c},
it suffices to consider the left diagram.

Consider the mixed subcomplex $N\subset\cM'(U\frakg)$
given by $N_0=0$, $N_1=\ker\partial'$
and $N_n=\rE_n(U)_\norm$. Because $\psi'(1)=\Upsilon'(1)$ by Example
\ref{ex:Upsilon(1)}, the difference
$f=\Upsilon'(1\otimes e)-\psi'$ factors through $HN(N)$. Put
$\phi^n=(-1)^n (sB')^ns:N\to N[2n+1]$.
One checks that $\phi:=\sum_{i=0}^\infty\phi^n$ is a natural,
$k$-linear contracting homotopy for $HN(N)$. Now apply Lemma \ref{lem:kappa}.
\quad\qed\end{proof}

%\begin{cor}\label{cor:Upsi-upsi}
%The diagram
%\[
%\xymatrix{&HN(\cM(U\g))\\
%\wedge\g\ar[ur]^{\upsilon}\ar[r]_{e}&\rB(U\g)\ar[u]_{\Upsilon}}
%\]
%commutes up to natural $k$-linear homotopy.
%\end{cor}

\goodbreak%\subsection{Compatibility}
There are simple formulas for $\tau$ and $\tau\,\psi$
in the normalized complexes.

\begin{lem}\label{lem:taux}
Let $x_1,\dots,x_n\in\g$. Then in $C(U\g)_\norm$ we have:
\[
\tau(x_1\otimes\dots\otimes x_n)=1\otimes x_1\otimes\dots\otimes x_n
\]
\end{lem}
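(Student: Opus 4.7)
The plan is a direct unwinding of the formula for $\tau$ in Lemma \ref{map:tau} using the fact that the $x_i$ are primitive, combined with the definition of the normalized Hochschild complex.

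Recall that an element $x \in \mathfrak{g} \subset U\mathfrak{g}$ is primitive, so in Sweedler notation $x^{(0)} \otimes x^{(1)} = x \otimes 1 + 1 \otimes x$. Therefore, when I apply the formula
\[
\tau(x_1 \otimes \cdots \otimes x_n) = S(x_1^{(0)} \cdots x_n^{(0)}) \otimes x_1^{(1)} \otimes \cdots \otimes x_n^{(1)},
\]
I obtain a sum of $2^n$ terms indexed by the choices, at each slot $i$, of sending $x_i$ either into the $S$-factor (with residue $1$ in the $i$-th tensor slot) or into the $i$-th tensor slot (with residue $1$ inside the $S$-factor).

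The key point is that $C(U\mathfrak{g})_\norm = U\mathfrak{g}\otimes (U\mathfrak{g}/k)^{\otimes n}$, so any term in which some $x_i^{(1)} = 1$ (i.e., the primitive $x_i$ was routed into the $S$-factor) has a $1$ in a positive tensor position and is therefore degenerate and vanishes in the normalized complex. The only surviving summand is the one where $x_i^{(0)} = 1$ and $x_i^{(1)} = x_i$ for every $i$. For this term, $S(x_1^{(0)} \cdots x_n^{(0)}) = S(1) = 1$, and we obtain exactly $1 \otimes x_1 \otimes \cdots \otimes x_n$, as claimed.

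Since each step amounts to a direct inspection of definitions, there is no real obstacle; the only thing to watch is keeping track of which Sweedler components carry the $1$'s, which is handled cleanly by invoking normalization to kill all but the one surviving term.
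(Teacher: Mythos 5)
Your argument is correct and is essentially the paper's own proof: expand $\Delta^{\otimes n}$ on the primitives $x_i$, observe that every term in which some $x_i^{(1)}=1$ lands in a degenerate (hence zero) element of $C(U\g)_\norm$, and keep the single surviving term $S(1)\otimes x_1\otimes\cdots\otimes x_n$. The paper merely phrases the same computation via the factorization $\tau=(S\otimes 1^{\otimes n})(\nabla^{(n)}\otimes 1^{\otimes n})\sigma\Delta^{\otimes n}$ with a riffle shuffle $\sigma$, so no substantive difference.
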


\begin{proof} %\edit{used only in \ref{cor:taupsi}}
Let $\nabla^{(n)}:U\g^{\otimes n}\to U\g$ be the multiplication map
%$[~]: C\to C_{\norm}$ the projection,
and $\sigma\in\fS_{2n}$
the (bad) riffle shuffle $\sigma(2i-1)=i$, $\sigma(2i)=n+i$.
By definition (see \ref{map:tau}),
\begin{equation}\label{eq:compotau}
\tau=(S\otimes 1^{\otimes n})\circ (\nabla^{(n)}\otimes
1^{\otimes n})\circ\sigma\circ\Delta^{\otimes n}
\end{equation}
in $C(U\g)$. Since the $x_i$ are primitive,
\[
\Delta^{\otimes n}(x_1\otimes\dots\otimes x_n)=(x_1\otimes 1+
1\otimes x_1)\otimes\dots\otimes(x_n\otimes 1+1\otimes x_n)
\]
Expanding this product gives a sum in which
\[
x=1\otimes x_1\otimes 1\otimes x_2\otimes\dots\otimes 1\otimes x_n
 \]
is the only term not mapped to a degenerate element of $C(U\g)$
under the composition \eqref{eq:compotau}. Thus in $C(U\g)_\norm$ we have:
\begin{gather*}
\tau(x_1\otimes\dots\otimes x_n)=(S\otimes 1^{\otimes n})
(\nabla^{(n)}\otimes 1^{\otimes n})\sigma(x)\\
  =(S\otimes 1^{\otimes n})(\nabla^{(n)}\otimes 1^{\otimes n})
(1\otimes\dots\otimes 1\otimes x_1\otimes\dots\otimes x_n)\\
  =(S\otimes 1^{\otimes n})(1\otimes x_1\otimes\dots\otimes x_n)=
1\otimes x_1\otimes\dots\otimes x_n.~ \rlap{\quad\qed} %\qedhere
\end{gather*}
\end{proof}

\begin{cor}\label{cor:taupsi} %was {lem:ce=taupsi}
%The following diagram is naturally homotopy commutative.
%\[
%\xymatrix{\wedge \g\ar[d]_e\ar[r]^-\psi & HN(\cM(U\g))\ar[d]^\tau\\
% \rB(U\g)\ar[r]^-c\ar[ur]_\Upsilon & HN(U\g)_\norm }
%\]
%Moreover,
We have:~~
$ %\begin{equation}\label{map:ce}
\tau\psi(x_1\land\dots\land x_n)=
(\dots,0,0,0,1\otimes e(x_1\land\dots\land x_n)).
$ %\end{equation}
\end{cor}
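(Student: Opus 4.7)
The proof will be a direct unwinding of the definitions, chaining together Lemma \ref{map:psi}, Lemma \ref{map:tau}, and Lemma \ref{lem:taux}. First, recall from Lemma \ref{map:psi} that
\[
\psi(x_1\wedge\cdots\wedge x_n)=(\ldots,0,0,0,e(x_1\wedge\cdots\wedge x_n))
\]
sits in the last coordinate of $HN(\cM(U\g))$. Since by Lemma \ref{map:tau} the map $\tau$ is a morphism of mixed complexes $\cM(U\g)\hookrightarrow C(U\g)_{\norm}$, the induced map on negative cyclic complexes $HN(\cM(U\g))\to HN(U\g)_{\norm}$ is obtained by applying $\tau$ in each coordinate, so zero coordinates remain zero and we have
\[
\tau\psi(x_1\wedge\cdots\wedge x_n)=(\ldots,0,0,0,\tau(e(x_1\wedge\cdots\wedge x_n)))
\]
in $HN(U\g)_{\norm}$.

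It therefore remains to identify $\tau(e(x_1\wedge\cdots\wedge x_n))$ with $1\otimes e(x_1\wedge\cdots\wedge x_n)$ inside $C(U\g)_{\norm}$. By $k$-linearity of $\tau$ and the explicit antisymmetrization formula \eqref{map:e}, expanding gives
\[
\tau(e(x_1\wedge\cdots\wedge x_n))=\sum_{\sigma\in\fS_n}\sg(\sigma)\,\tau(x_{\sigma(1)}\otimes\cdots\otimes x_{\sigma(n)}).
\]
Each summand is now a tensor of primitive elements, so Lemma \ref{lem:taux} applies and yields $\tau(x_{\sigma(1)}\otimes\cdots\otimes x_{\sigma(n)})=1\otimes x_{\sigma(1)}\otimes\cdots\otimes x_{\sigma(n)}$ in $C(U\g)_{\norm}$. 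Pulling the $1\otimes$ out of the sum, the remaining expression is precisely $1\otimes e(x_1\wedge\cdots\wedge x_n)$, completing the verification.

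There is essentially no obstacle here: the statement is a bookkeeping corollary of the three cited lemmas. The only point requiring a moment of care is the componentwise behavior of $\tau$ on $HN$, which is automatic from its being a morphism of mixed complexes, and the fact that the normalization is respected, which is guaranteed by the last clause of Lemma \ref{map:tau}.
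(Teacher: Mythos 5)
Your proof is correct and follows essentially the same route as the paper, which simply says ``Combine Lemmas \ref{map:psi} and \ref{lem:taux}''; you have merely spelled out the coordinate-wise action of $\tau$ on $HN$ (justified by Lemma \ref{map:tau}) and the linear expansion of $e$ before invoking Lemma \ref{lem:taux} term by term.
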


\begin{proof} %\edit{used only in Thm.\ref{thm:theta=c}}
%By Lemma \ref{lem:ce=taupsi}, $\tau\psi=\tau\Upsilon e=ce$.
Combine Lemmas \ref{map:psi} and \ref{lem:taux}.
\quad\qed\end{proof}

\subsection{The Loday-Quillen map}

We can now show that $\tau\,\psi$ factors through the %normalized
Connes' complex $C^\lambda(U\g)=\coker(1-t:U^{\otimes\ast}\to HH(U\g))$.
We have a homomorphism $\theta$ which lifts
the Loday-Quillen map of \cite[10.2.3.1, 11.3.12]{lod} to $U\g$:
\begin{gather*}
\theta:\wedge^{n+1}\g\to C^\lambda_{n}(U\g)\\
\theta(x_0\land x_1\land\dots\land x_n)=x_0\otimes e(x_1\land\dots\land x_n).
\end{gather*}
Because we are working modulo the image of $1-t$, $\theta$ is well defined.
The following result is implicit in the proof of \cite[10.2.4]{lod}
for $r=1$, $A=U\g$.

\begin{lem}
$\theta$ is a chain homomorphism $\wedge\g \to C^\lambda(U\g)[-1]$.
\end{lem}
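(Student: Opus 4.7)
The plan is a direct verification, leveraging Kassel's intertwining identity $\partial e = ed$ from \cite{kas} to deal with the interior of the Hochschild boundary, and the cyclic equivalence built into $C^\lambda$ to deal with the edge contributions. Write $\omega = y_1\wedge\cdots\wedge y_n$, so that $\theta(x_0\wedge\omega)=x_0\otimes e(\omega)$, and apply the Hochschild boundary
\[
b(a_0\otimes\cdots\otimes a_n)=\sum_{i=0}^{n-1}(-1)^i a_0\otimes\cdots\otimes a_i a_{i+1}\otimes\cdots\otimes a_n + (-1)^n a_n a_0 \otimes a_1\otimes\cdots\otimes a_{n-1}.
\]
The interior terms $(1\le i\le n-1)$ collectively equal $x_0\otimes\partial e(\omega)$, and by Kassel's identity this is $x_0\otimes ed(\omega)=\sum_{1\le p<q\le n}(\pm)\,x_0\otimes e([y_p,y_q]\wedge y_1\wedge\cdots\widehat{y_p}\cdots\widehat{y_q}\cdots\wedge y_n)$.

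On the other hand, the contributions to $\theta d(x_0\wedge\omega)$ from the pairs $\{p,q\}$ with $p\ge 1$ have the form $\sum_{1\le p<q\le n}(\pm)\,[y_p,y_q]\otimes e(x_0\wedge y_1\wedge\cdots\widehat{y_p}\cdots\widehat{y_q}\cdots\wedge y_n)$, carrying $[y_p,y_q]$ in the first tensor slot instead of $x_0$. These two expressions match (up to an overall sign) modulo $1-t$: the identity $x_0\otimes e([y_p,y_q]\wedge w)\equiv -[y_p,y_q]\otimes e(x_0\wedge w)$ in $C^\lambda$ is exactly the antisymmetry that $\theta$ inherits from $\wedge$ via the cyclic operator (this can be verified directly by $t$-rotations of each summand of $e$).

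The remaining leading ($i=0$) and wrap-around ($i=n$) terms of $b\theta(x_0\wedge\omega)$ must produce the $p=0$ contributions $\sum_{q=1}^n(\pm)\,[x_0,y_q]\otimes e(y_1\wedge\cdots\widehat{y_q}\cdots\wedge y_n)$ to $\theta d(x_0\wedge\omega)$. Expanding $e(\omega)=\sum_{\sigma\in\fS_n}\sg(\sigma)\,y_{\sigma(1)}\otimes\cdots\otimes y_{\sigma(n)}$ and grouping the leading terms by $q=\sigma(1)$ and the wrap-around terms by $q=\sigma(n)$, each group reassembles the remaining permutation into $e(y_1\wedge\cdots\widehat{y_q}\cdots\wedge y_n)$ times a Koszul sign. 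Combined, the two contributions produce the commutator $x_0 y_q - y_q x_0 = [x_0,y_q]$ in the first slot, as required.

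The main technical obstacle is the sign bookkeeping: one must verify that the Koszul signs $(-1)^{q-1}$ and $(-1)^{n-q}$ produced by the $\sigma$-grouping, together with the boundary signs $(-1)^0$ and $(-1)^n$, and the sign in the cyclic equivalence $x_0\otimes e([y_p,y_q]\wedge w)\equiv -[y_p,y_q]\otimes e(x_0\wedge w)$, all conspire to yield a consistent global sign matching the Chevalley--Eilenberg sign $(-1)^{p+q}$. Once this is checked, $b\theta=\pm\theta d$ in $C^\lambda(U\g)[-1]$, so $\theta$ is a chain homomorphism.
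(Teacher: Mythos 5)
Your proposal is essentially the paper's proof: split $b\theta(x_0\wedge\omega)$ into the interior faces (which give $x_0\otimes ed(\omega)$ via Kassel's identity $\partial e = ed$, the $d_0$ and $d_n$ faces of the bar boundary vanishing because the $x_i$ are primitive) and the two edge faces $i=0,n$ (which after reindexing reassemble into the commutator terms $[x_0,x_i]\otimes e(\cdots)$), and then match these against the $p\ge1$ and $p=0$ groups of terms in $\theta d$ — your $1$-$t$ rotation moving $[y_p,y_q]$ out of the first slot is the same move as the paper's use of antisymmetry of $\wedge$ before applying $\theta$, since $\theta$ factors through $C^\lambda$ precisely because such transpositions are $1$-$t$-equivalent. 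The sign bookkeeping you defer at the end is exactly where the paper's proof spends its effort; it does close, giving $b\theta=-\theta d$.
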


\begin{proof}
To show that $b\theta=-\theta d$, we fix a
monomial $x_1\land\dots\land x_n$ and compute that
$b\theta(x_0\land\dots\land x_n)=\sum_{\sigma\in\fS_n}
b(x_0\otimes x_{\sigma(1)}\otimes\dots\otimes x_{\sigma(n)}) =A+B$,
where $A$ equals:
\begin{gather*}
\sum_{\sigma\in\fS_n}\sg(\sigma)\biggl(
  x_0x_{\sigma(1)}\otimes x_{\sigma(2)}\otimes\dots\otimes x_{\sigma(n)}
+(-1)^nx_{\sigma(n)}x_0\otimes
  x_{\sigma(1)}\otimes\dots\otimes x_{\sigma(n-1)}\biggr)
\\
=\sum_{\sigma\in\fS_n}\sg(\sigma)\biggl(
  x_0x_{\sigma(1)}\otimes x_{\sigma(2)}\otimes\dots\otimes x_{\sigma(n)}
 - x_{\sigma(1)}x_0\otimes
	x_{\sigma(2)}\otimes\dots\otimes x_{\sigma(n)}\biggr) \\
= \sum_{i=1}^n\sum_{\sigma\in\fS\atop\sigma(i)=i}
   (-1)^{i-1}\sg(\sigma)[x_0,x_i]\otimes x_{\sigma(1)}\otimes\cdots\otimes
x_{\sigma(i-1)}\otimes x_{\sigma(i+1)}\otimes\dots\otimes x_{\sigma(n)}\\
= \sum_{i=1}^n(-1)^{i-1}[x_0,x_i]\otimes e(x_1\land\dots\land x_{i-1}
\land x_{i+1}\land\dots\land x_n),
\end{gather*}
and $B$ equals
\begin{gather*}
\sum_{\sigma\in\fS_n}\sum_{i=1}^{n-1}(-1)^i\sg(\sigma)
   x_0\otimes x_{\sigma(1)}\otimes\dots\otimes x_{\sigma(i)}x_{\sigma(i+1)}
	\otimes\dots\otimes x_{\sigma(n)}
\\
%\sum_{\sigma\in\fS_n}
= x_0\otimes e(d(x_1\land\dots\land x_n)).
\end{gather*}
Similarly, $\theta\,d(x_0\land\dots\land x_n)$ is the sum of $-A$ and
\[
\sum_{0<i<j\le n}\!(-1)^{i+j+1}
x_0\otimes e(
[x_i,x_j]\wedge x_1\wedge\cdots\wedge x_n)
=-x_0\otimes e(d(x_1\land\dots\land x_n))
\]
which equals $-B$.  Therefore
$b\theta(x_0\land\dots\land x_n)=-\theta d(x_0\land\dots\land x_n)$.
% is the sum of this term and
%$\sum_{\sigma\in\fS_n}x_0\otimes e(d(x_1\land\dots\land x_n))$,
%which equals $\theta(d(x_0\land\dots\land x_n))$.
\quad\qed\end{proof}

\noindent{\it Warning:}
the sign convention used for $d$ in \cite[10.1.3.3]{lod}
differs by $-1$ from the usual convention, used here and in
\cite[7.7]{chubu} and \cite{kas}.
\smallskip

\smallskip
It is well known and easy to see that, because Connes' operator $B$
vanishes on the image of $1-t$, it induces a chain map
for every algebra $A$:
\begin{gather*}
B:C^\lambda(A)[-1]\to HN(A)\\
B[x]=(\dots,0,0,0,Bx).
\end{gather*}

Let $\wedge^+\g$ denote the positive degree part of $\wedge\g$.
%\begin{lem}\label{lem:taupsi=Btheta}
%$\tau\,\psi=B\,\theta$ as chain maps $\wedge^+\g\to HN(U\g)_\norm$,
%where $\wedge^+\g$ is the positive degree part of $\wedge\g$.
%\end{lem}

\begin{thm}\label{thm:theta=c}%\label{lem:taupsi=Btheta}
We have $\tau\,\psi=B\,\theta$ as chain maps $\wedge^+\g\to HN(U\g)_\norm$.

Hence the following diagram commutes up to natural chain homotopy.
\[ \xymatrix{
\wedge^+\g\ar[dr]^{c\circ e}\ar[d]_\theta\\
C^\lambda(U\g)[-1]\ar[r]^B & HN(U\g)_\norm.
%& HN(U\g)\\ \wedge\g\ar[ur]^{c\circ e}\ar[dr]_\theta &\\ &
%C^\lambda(U\g)[-1]\ar[uu]_B
} \]
\end{thm}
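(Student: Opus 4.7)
The plan is to prove the equality $\tau\,\psi=B\,\theta$ as a strict identity of maps on $\wedge^+\g$, and then deduce the homotopy-commutativity of the triangle by post-composing with $\tau$ the right-hand square of Lemma \ref{lem:ce=taupsi}. Both sides of the asserted equality land in the last coordinate of $HN(U\g)_\norm$: by Corollary \ref{cor:taupsi} we have $\tau\psi(x_1\wedge\cdots\wedge x_{n+1})=(\dots,0,0,1\otimes e(x_1\wedge\cdots\wedge x_{n+1}))$, while by the very definition of the lifted Connes map $B\colon C^\lambda(U\g)[-1]\to HN(U\g)$, the image $B\theta(x_1\wedge\cdots\wedge x_{n+1})$ has only one nonzero component, namely $B(x_1\otimes e(x_2\wedge\cdots\wedge x_{n+1}))$. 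The whole theorem therefore reduces to the identity
\[
B\bigl(x_1\otimes e(x_2\wedge\cdots\wedge x_{n+1})\bigr)=1\otimes e(x_1\wedge\cdots\wedge x_{n+1})\quad\text{in } C_{n+1}(U\g)_\norm.
\]

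I would prove this by direct computation on normalized chains. Connes' operator on $C_n(U\g)_\norm$ is given by $B(a_0\otimes\cdots\otimes a_n)=\sum_{i=0}^{n}(-1)^{in}\,1\otimes a_i\otimes\cdots\otimes a_n\otimes a_0\otimes\cdots\otimes a_{i-1}$. Plugging in $a_0=x_1$ and expanding $e(x_2\wedge\cdots\wedge x_{n+1})=\sum_{\tau\in\fS_n}\sg(\tau)\,x_{\tau(2)}\otimes\cdots\otimes x_{\tau(n+1)}$, one obtains a double sum indexed by $(i,\tau)\in\{0,\dots,n\}\times\fS_n$. The key combinatorial observation is that every permutation $\sigma\in\fS_{n+1}$ factors uniquely as a cyclic rotation (determined by the position $i+1$ of $x_1$ in $\sigma$) composed with a permutation $\tau$ of the remaining $n$ letters, and that this factorization is precisely the one produced by the $B$-sum. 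Since an $(n+1)$-cycle has sign $(-1)^n$, the sign $(-1)^{in}$ accompanying the $i$-th rotation agrees with $\sg(\sigma)/\sg(\tau)$, and regrouping the double sum gives exactly $\sum_{\sigma\in\fS_{n+1}}\sg(\sigma)\,1\otimes x_{\sigma(1)}\otimes\cdots\otimes x_{\sigma(n+1)}=1\otimes e(x_1\wedge\cdots\wedge x_{n+1})$.

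For the homotopy statement, Lemma \ref{lem:ce=taupsi} asserts that $\Upsilon\circ e$ and $\psi$ are naturally $k$-linearly chain homotopic as maps $\wedge\g\to HN(\cM(U\g))$; applying the chain map $\tau\colon HN(\cM(U\g))\to HN(U\g)_\norm$ (Lemma \ref{map:tau}) we obtain $c\circ e=\tau\circ\Upsilon\circ e\simeq\tau\circ\psi$. Combining with the equality $\tau\psi=B\theta$ just established gives $c\circ e\simeq B\circ\theta$, which is the desired homotopy commutativity.

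The only genuinely delicate step is bookkeeping the signs in the cyclic-shift identity; everything else is formal, following from the preceding lemmas. Once one writes out the two sums side by side and fixes the convention that $t$ acts on $C_n$ with sign $(-1)^n$ (so that cyclic shift by $i$ carries sign $(-1)^{in}$), the match of signs is automatic.
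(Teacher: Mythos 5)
Your argument is correct, and the overall scaffolding matches the paper's: both proofs invoke Lemma \ref{lem:ce=taupsi} to reduce the homotopy-commutativity of the triangle to the strict identity $\tau\psi = B\theta$, and both use Corollary \ref{cor:taupsi} to identify $\tau\psi(x_1\wedge\cdots\wedge x_{n+1})$ with $(\dots,0,0,1\otimes e(x_1\wedge\cdots\wedge x_{n+1}))$. Where you part ways with the paper is in how the resulting single identity
\[
B\bigl(x_1\otimes e(x_2\wedge\cdots\wedge x_{n+1})\bigr)=1\otimes e(x_1\wedge\cdots\wedge x_{n+1})
\]
is verified. You expand both sides directly on normalized chains and match terms via the coset decomposition of $\fS_{n+1}$ with respect to the stabilizer $\fS_n$ of $1$ and the cyclic subgroup generated by an $(n+1)$-cycle $c$; the sign bookkeeping goes through because $\sg(c^i)=(-1)^{ni}$ agrees with the sign in Connes' operator, so $\sg(\sigma)=\sg(\tau)(-1)^{ni}$ exactly. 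This is a self-contained combinatorial computation.

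The paper takes a slicker but less elementary route: it observes that neither side of the identity involves any multiplication in $U\g$, hence the identity may be verified after replacing $\g$ by an abelian Lie algebra, so that $U\g=S\g$ is commutative. It then recognizes $1\otimes e(x_1\wedge\cdots\wedge x_n)$ as the shuffle product $Bx_1\star\cdots\star Bx_n$ and $\theta(x_1\wedge\cdots\wedge x_n)$ as $x_1\star Bx_2\star\cdots\star Bx_n$, and invokes the shuffle identity $B(x_1\star Bx_2\star\cdots\star Bx_n)=Bx_1\star\cdots\star Bx_n$ from \cite[3.1]{LQ} (or \cite[4.3.5]{lod}). Your approach trades that citation (and the reduction to the abelian case) for explicit sign verification; both are valid and of comparable length, so this is a legitimate alternate proof of the key step rather than a gap.

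One small caution: the $B$-formula you use, $B(a_0\otimes\cdots\otimes a_n)=\sum_{i=0}^n(-1)^{in}1\otimes a_i\otimes\cdots\otimes a_n\otimes a_0\otimes\cdots\otimes a_{i-1}$, cycles in the opposite direction from the paper's convention for $t$ (Lemma \ref{map:tc} pulls the last element to the front). This is harmless here because $B=sN$ on normalized chains and $N=\sum_{i=0}^n t^i$ is symmetric under $t\mapsto t^{-1}$; but it is worth a sentence to flag that you have reindexed.
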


\noindent
Theorem \ref{thm:theta=c} fails for the degree~0 part $k$ of $\wedge\g$.
Indeed, $\theta(1)=0$ for degree reasons,
while from Example \ref{ex:Upsilon(1)} we see that
$c\,e(1)=\tau\,\psi(1)=(\dots,0,1)$ is nonzero.

\begin{proof}
%\edit{used only in Ex.\ref{ex:rho}\\which is used in Th.\ref{thm:main}}
By Lemma \ref{lem:ce=taupsi}, it suffices to check that $\tau\,\psi=B\,\theta$.
By Corollary \ref{cor:taupsi} and \eqref{map:e}, we have
\begin{equation}\label{eq:taupsilon}
\tau\psi(x_1\land\dots\land x_n)= (\dots,0,0,
	\sum_{\sigma\in\fS_n}\sg(\sigma)1\otimes
	x_{\sigma(1)}\otimes\dots\otimes x_{\sigma(n)}).
\end{equation}
Note the expression above contains no products in $U\g$; neither do the
formulas for $\theta(x_0\land x_1\land\dots\land x_n)$ and the Connes'
operator in $C(U\g)$. Thus we may assume that $\g$ is abelian, and that
$U\g=S\g$ is a (commutative) symmetric algebra.
We may interpret $\theta(x_1\land\dots\land x_n)$ as the
shuffle product $x_1\star B(x_2)\star\dots\star B(x_n)$
(see \cite[4.2.6]{lod}), and
the nonzero coordinate of \eqref{eq:taupsilon} as
the shuffle product %\cite[4.2.6]{lod}
\begin{gather*}
(1\otimes x_1)\star\dots \star (1\otimes x_n)
=B(x_1)\star\dots\star B(x_n) \qquad\qquad \\
~=~ B\bigl(x_1\star B(x_2)\star\dots\star B(x_n)\bigr) \quad
\text{ ~~by \cite[3.1]{LQ} or \cite[4.3.5]{lod}} \\
=B(\theta(x_1\land\dots\land x_n)).  \qquad\qed\qquad\qquad %\qedhere
\end{gather*}
%Since $x_1\star B(x_2)\star\dots\star B(x_n)=\theta(x_1\land\dots\land x_n)$,
%this equals $B(\theta(x_1\land\dots\land x_n))$.
\end{proof}

%\begin{thm}\label{thm:theta=c}
%The following diagram commutes up to natural chain homotopy.
%\[ \xymatrix{
%\wedge\g\ar[r]^{c\circ e}\ar[d]_\theta & HN(U\g)\\
%C^\lambda(U\g)[-1]\ar[ur]_B
%} \]
%\end{thm}
%\begin{proof} Immediate from Lemmas \ref{lem:ce=taupsi}
%and \ref{lem:taupsi=Btheta}.
%\quad\qed\end{proof}

%____________________________________________________
\section{Nilpotent Lie algebras and nilpotent groups}\label{sec4}
\numberwithin{equation}{section}

In this and the remaining sections we shall fix the ground ring $k=\Q$.
Let $\g$ be a nilpotent Lie algebra;
consider the completion $\hat{U}\g$ of its enveloping algebra with
respect to the augmentation ideal, and set
\[
G=\exp{\g}\subset \hat{U}\g.
\]
This is a nilpotent group, and $\Ha=\Q[G]$ is a Hopf algebra.
The inclusion $G\subset\hat{U}\g$ induces a homomorphism
$\Ha=\Q[G]\hookrightarrow\hat{U}\g$, and $\haha\cong\hat{U}\g$
by \cite[A.3]{Q-RHT}.
%By subsection \ref{subsec:passage}, we have chain maps from
%both $HN(\Q[G])$ and $HN(U\g)$ to $HN^{\top}(\haha)$.

On the other hand, Suslin and Wodzicki showed
in \cite[5.10]{sw} that there is a natural quasi-isomorphism
$sw:\rB(\Q[G])\to \wedge\g$. Putting these maps together with those
considered in the previous sections, we get a diagram
\begin{equation}\label{diag:nil1}
\xymatrix{
\rB(\Q[G])\ar[d]_{sw}\ar[rr]^{c}\ar[dr] && HN(\Q[G])\ar[dr]&\\
\wedge\g\ar[d]_e &\rB^{\top}(\haha)\ar[rr]^{\hat{c}}&&HN^{\top}(\haha).\\
           B(U\g)\ar[ur]\ar[rr]_{c}&&HN(U\g)\ar[ur]&}
%\xymatrix{\rB(\Q[G]\ar[d]_{sw}\ar[r]^{c}&HN(\Q[G])\ar[dr]&\\
%\wedge\g\ar[d]_e&&HN^{\top}(\haha). \\
%\rB(U\g)\ar[r]^{c}&HN(U\g)\ar[ur]&}
\end{equation}

\begin{prop}\label{prop:nil1}
Diagram \eqref{diag:nil1} commutes up to natural chain homotopy.
\end{prop}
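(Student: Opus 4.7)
The plan is to decompose the diagram \eqref{diag:nil1} into three commuting subregions. The top trapezoid (consisting of $c$, $\hat c$, and the completion maps $\rB(\Q[G])\to\rB^{\top}(\haha)$ and $HN(\Q[G])\to HN^{\top}(\haha)$) commutes strictly by the naturality of the construction of $c$ with respect to filtered cocommutative Hopf algebra morphisms, as established in subsection \ref{subsec:passage}, applied to the canonical filtered map $\Q[G]\hookrightarrow\haha$ (with the augmentation-ideal filtration). The same argument applied to $U\g\to\hat U\g\cong\haha$ (via \cite[A.3]{Q-RHT}) gives strict commutativity of the bottom trapezoid.

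This reduces the proposition to showing that the two natural chain maps
\[
\rB(\Q[G]) \xrightarrow{\;sw\;} \wedge\g \xrightarrow{\;e\;} \rB(U\g) \longrightarrow \rB^{\top}(\haha)
\quad\text{and}\quad
\rB(\Q[G]) \longrightarrow \rB^{\top}(\haha)
\]
agree up to natural chain homotopy, the second map being induced by $\Q[G]\hookrightarrow\haha$. Both are quasi-isomorphisms: the first because $sw$ (by \cite[5.10]{sw}), $e$ (by \cite[8.2]{kas}), and the completion on $\rB(U\g)$ are each quasi-isomorphisms; the second by the Mal'cev identification $H_\ast(G;\Q)\cong H_\ast(\g;\Q)$. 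Both maps visibly agree on $H_0$ and induce the same isomorphism on higher homology under the Mal'cev correspondence.

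I would construct the natural chain homotopy in the style of Goodwillie's uniqueness argument (cited in Example \ref{Gmap:c}): reduce, via a continuity/colimit argument, to finitely generated nilpotent $\g$ of some class $r$, and induct on $r$, using the acyclicity of the bar resolution $\rE^{\top}(\haha)$ as a resolution of $\Q$ to lift the difference to a null-homotopy degree by degree over the tower of central extensions that build up $\g$. The main obstacle is precisely this inductive construction: $sw$ is defined via a non-canonical PBW-type filtration on $\Q[G]$ whereas the direct completion map is purely algebraic, so matching them requires carefully tracking the exponential/logarithm dictionary that identifies $\haha$ with $\hat U\g$ in the Mal'cev completion.
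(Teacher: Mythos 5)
Your decomposition of the diagram into two parallelograms (the completion trapezoids, each commuting strictly by naturality of $c$ and passage to completion) plus the left triangle is exactly the paper's decomposition, and your argument for the parallelograms is correct and matches the paper's. So the entire content of the proposition is concentrated in the left triangle, i.e.\ in showing that $\iota\, e\, sw$ and the map $\rB(\Q[G])\to\rB^{\top}(\haha)$ induced by $\Q[G]\hookrightarrow\haha$ agree up to natural chain homotopy; this is precisely the paper's Lemma~\ref{lem:nil2}.

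Here is where the proposal has a genuine gap. You correctly note that both maps are quasi-isomorphisms and agree on $H_0$, but that gives you agreement in the derived category, not a \emph{natural} chain homotopy, and you acknowledge the missing step yourself (``The main obstacle is precisely this inductive construction\ldots''). The proposed route --- reduce to finitely generated $\g$, induct on the nilpotency class via central extensions, lift degree by degree --- is not what's needed and would be painful to carry out while tracking naturality and the exponential/logarithm dictionary. The paper's argument for Lemma~\ref{lem:nil2} is short and entirely avoids any induction: one lifts everything to the bar resolutions $\rE$, where $sw$ is by construction (Suslin--Wodzicki \cite[5.10]{sw}) induced by an $\Ha$-linear map $\rE(\Q[G])\to\haha\hotimes\wedge\g$, so both composites become $\Ha$-linear chain maps $\rE(\Q[G])\to\rE^{\top}(\haha)$. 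They agree on $\rE_0$ (both are the inclusion $\Q[G]\hookrightarrow\haha$), so their difference lands in the subcomplex $\ker(\hat\epsilon:\rE^{\top}(\haha)\to\Q)$. That subcomplex is contractible via the extra degeneracy \eqref{map:s}, and then the technical Lemma~\ref{lem:kappa} --- which builds a natural $A$-linear chain contraction from a $k$-linear one whenever the source is of the form $A\otimes V_\bullet$ --- produces the desired natural chain homotopy in one stroke. In other words, the key tool you are missing is Lemma~\ref{lem:kappa} applied after lifting to $\rE$; once you have it, no colimit reduction, no induction on nilpotency class, and no explicit Mal'cev bookkeeping are required.
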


\begin{proof}
% We have maps
%\[
%\xymatrix{\rB(\Q[G])\ar[d]_{sw}\ar[rr]^{c}\ar[dr]&&HN(\Q[G])\ar[dr]&\\
%\wedge\g\ar[d]_e&\rB^{\top}(\hat{U}\g)\ar[rr]^{\hat{c}}&&HN^{\top}(\hat{U}\g)\\
%           B(U\g)\ar[ur]\ar[rr]_{c}&&HN(U\g)\ar[ur]&}
%\]
%The outer edge ofthis diagram is \eqref{diag:nil1};
The two parallelograms commute by naturality.
The triangle on the left of \eqref{diag:nil1} commutes up to
natural homotopy by Lemma \ref{lem:nil2} below.
\quad\qed\end{proof}

%\bigskip
%Consider the maps $\Q[G]\to \hat{U}\g$ and $U\g\to\hat{U}\g$.
%We have a diagram
%\begin{equation}\label{diag:nil2}
%\xymatrix{\rB(\Q[G])\ar[d]_{sw}\ar[r]&\rB^{\top}(\haha)\\
%           \wedge\g\ar[r]^e&\rB(U\g)\ar[u]_{\iota}}
%\end{equation}

\begin{lem}\label{lem:nil2}
The following diagram %Diagram \eqref{diag:nil2}
commutes up to natural chain homotopy.
\begin{equation*}%\label{diag:nil2}
\xymatrix{\rB(\Q[G])\ar[d]_{sw}\ar[r]&\rB^{\top}(\haha)\\
           \wedge\g\ar[r]^e&\rB(U\g)\ar[u]_{\iota}}
\end{equation*}
\end{lem}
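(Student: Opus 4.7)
Plan: The two composites in the square are natural $\Q$-linear chain maps from $\rB(\Q[G])$ to $\rB^{\top}(\haha)$. By Quillen's Mal'cev theory \cite[App. A]{Q-RHT}, $\widehat{\Q[G]} \cong \hat{U}\g = \haha$, so the top edge of the square is just the natural completion map. The strategy is a standard uniqueness-of-lifts argument: verify that both composites agree on homology, then upgrade to a natural chain homotopy by an acyclic-models / extra-degeneracy construction.

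For the $H_*$-agreement step, I would first note that both composites are quasi-isomorphisms. Indeed, $sw$ is a quasi-isomorphism by \cite[5.10]{sw}, $e$ is one by \cite[8.2]{kas}, and the completion $\iota\!:\rB(U\g) \to \rB^{\top}(\hat U\g)$ is a quasi-isomorphism because the $I$-adic filtration on $\rB_n(U\g)$ is degreewise bounded when $\g$ is nilpotent. Both composites then realize the canonical Mal'cev identification $H_*(\g;\Q) \iso H_*(\rB^{\top}(\haha))$; this can be checked on classes $[\exp x_1\,|\,\dots\,|\,\exp x_n]$, where both paths produce the antisymmetrized tensor $\sum_{\sigma}\sg(\sigma)\, x_{\sigma(1)}\otimes\dots\otimes x_{\sigma(n)}$ modulo terms of higher adic filtration.

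To promote $H_*$-agreement to a natural chain homotopy, I would use the fact that $\rB^{\top}(\haha) = \Q\,\hotimes_{\haha}\,\rE^{\top}(\haha)$, where $\rE^{\top}(\haha)$ has a $\Q$-linear contracting homotopy inherited from the extra degeneracy $s(x) = 1\otimes x$ of \eqref{map:s}; this $s$ is filtration preserving since $\epsilon(I) = 0$, and so descends to the required quotient/completion. Combined with naturality in $\g$ and an acyclic-models argument over the category of nilpotent Lie $\Q$-algebras (with the free nilpotent Lie algebras as models), one builds the desired natural homotopy by an inductive construction in the spirit of Lemma \ref{lem:kappa}. The main obstacle is the $H_*$-agreement: pinning down two a priori different natural transformations as the same, which ultimately reduces to a Baker--Campbell--Hausdorff identity comparing the antisymmetrization $e$ with the $\log$/$\exp$ combinatorics underlying $sw$, and which one verifies in the universal case of a free nilpotent Lie algebra.
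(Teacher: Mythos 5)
The paper's proof sidesteps the $H_*$-agreement step entirely, which you identify as the ``main obstacle.'' The key observation is that both composites in the square are induced by $\haha$-linear maps on the resolution level: by \cite[5.10]{sw} the map $sw$ comes from a map $\rE(\Q[G])\to\haha\hotimes\wedge\g$, and $\iota e$ comes from $1\otimes\iota e:\haha\hotimes\wedge\g\to\rE^\top(\haha)$. So the whole square arises by applying $\Q\otimes_{\haha}(-)$ to a comparison of two $\haha$-linear chain maps $\rE(\Q[G])\to\rE^\top(\haha)$, one being the composite just described and the other induced directly by $\Q[G]\to\haha$. These two maps visibly agree in degree~0, so their difference factors through $\ker(\hat\epsilon:\rE^\top(\haha)\to\Q)$, which is contractible via the extra degeneracy $s$ of \eqref{map:s}. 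Lemma~\ref{lem:kappa} then produces the natural $\haha$-linear chain homotopy, which descends after tensoring. No homology computation, no Baker--Campbell--Hausdorff combinatorics, no acyclic-models machinery is required.

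Beyond being less efficient, your proposal has two concrete errors. First, you write that the extra degeneracy of $\rE^\top(\haha)$ ``descends to the required quotient'' $\rB^\top(\haha)$. It does not: $s$ is $\Q$-linear but not $\haha$-linear, so it induces nothing on $\Q\otimes_{\haha}\rE^\top(\haha)$. That is precisely why Lemma~\ref{lem:kappa} exists --- to manufacture an $\haha$-linear homotopy from a merely $\Q$-linear contraction using the module structure --- and why the comparison must be carried out at the $\rE$ level rather than the $\rB$ level. Second, your claim that $\iota$ is a quasi-isomorphism because ``the $I$-adic filtration on $\rB_n(U\g)$ is degreewise bounded when $\g$ is nilpotent'' is false: already for $\g$ abelian, $U\g$ is a polynomial ring and the powers of the augmentation ideal never vanish, so the filtration on $(U\g)^{\otimes n}$ is unbounded. (The quasi-isomorphism conclusion may well hold, but not for the stated reason --- and in any case the paper's argument never needs $\iota$ to be a quasi-isomorphism.)
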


\begin{proof}
By construction (see \cite[5.10]{sw}), the map $sw$ is induced by a
map $\rE(\Q[G])\to \haha\hotimes\wedge\g$. Let $\iota$ be the upward
vertical map; $\iota e$ is induced by $1\otimes\iota
e:\haha\hotimes\wedge\g\to \rE^{\top}(\haha)$. Thus it suffices
to show that the composite $\rE(\Q[G])\to \rE^{\top}(\haha)$ is
naturally homotopic to the map induced by the homomorphism
$\Q[G]\to\haha$. By definition, these maps agree on $\rE_0(\Q[G])$;
thus their difference goes to the subcomplex
$\ker(\hat{\epsilon}:\rE^{\top}(\haha)\to \Q)$ which is
contractible, with contracting homotopy induced by
\eqref{map:s}. Hence we may apply Lemma \ref{lem:kappa}; this finishes
the proof.
\quad\qed\end{proof}

\begin{ex}\label{ex:rho}
Let $\g=J_{\mathrm{Lie}}$ be the Lie algebra associated to an ideal $J$
in a $\Q$-algebra $A$ ($\g$ is $J$ with the commutator bracket); there is a
canonical algebra map $U\g\to A$ sending $\g$ onto $J$.
If $J$ is a nilpotent ideal then $\g$ is a nilpotent Lie algebra and
the induced algebra map $\hat{U}\g\to A$ restricts to an
isomorphism $G=\exp(\g)\iso (1+J)^{\times}$, as is proven in \cite[5.2]{sw}.
%\edit{used in \ref{lem:jc-ch}, \eqref{map:rht}\\ proof of Th.\ref{thm:main}}

Let $C(A,J)$ denote the kernel of $C(A)\to C(A/J)$, and let
$C^{\lambda}(U\g,\cI)$ denote the kernel of
$C^{\lambda}(U\g)\to C^{\lambda}(k)$.
The composite of $\theta$ with the map induced by $U\g\to A$ factors
through $C^\lambda(A,J)$, giving rise to a commutative diagram
\[ \xymatrix{
\wedge\g\ar[d]_{\rho}\ar[r]^-{\theta}& C^{\lambda}(U\g,\cI)[-1]\ar[d]\ar[dl]\\
C^\lambda(A,J)[-1]\ar@{^{(}->}[r]&C^\lambda(A)[-1].
} \]
The composite $\Q[G]\to\hat{U}\g\to A$ sends the augmentation ideal
$\cI_G$ of $\Q[G]$ to $J$.
Consider the resulting map
\[
j:HN(\Q[G],\cI_G)\to HN(\hat{U}\g,\hat{\cI})\to HN(A,J).
\]
Putting together Theorem \ref{thm:theta=c} with Proposition \ref{prop:nil1},
we get a naturally homotopy commutative diagram (with $G=(1+J)^\times$):
\[\xymatrix{
%\rB(\Q[G],\cI_G)\ar[r]^-{c}\ar[d]_{sw}& HN(\Q[G],\cI_G)\ar[dr]^j\\
%\wedge^+\g\ar[dr]^-\theta\ar[r]^-{c\circ e} & HN(U\g,\cI)_\norm\ar[r]
%   & HN(\hat{U}\g,\hat{\cI})_\norm\ar[r]	& HN(A,J)_\norm \\
%   & C^{\lambda}(U\g,\cI)[-1]\ar[rr]\ar[u]_B &&C^{\lambda}(A,J)[-1].\ar[u]_B
%
\rB(\Q[G],\cI_G)\ar[r]^-{c}\ar[d]_{sw}& HN(\Q[G],\cI_G)\ar[dr]^j\\
\wedge^+\g\ar[dr]^-\theta\ar[r]^-{c\circ e} & HN(U\g,\cI)_\norm\ar[r]
	& HN(A,J)_\norm \\
   & C^{\lambda}(U\g,\cI)[-1]\ar[r]\ar[u]_B &C^{\lambda}(A,J)[-1].\ar[u]_B
} \]
\end{ex}

\goodbreak
\section{The relative Chern character of a nilpotent ideal}\label{sec5}
\numberwithin{equation}{subsection}

In this section we establish Theorem \ref{thm:main}, promised in
\eqref{intro:agree}, that the two definitions \eqref{intro:ch} and
\eqref{intro:ch'} of the Chern character $K_*(A,I)\to HN_*(A,I)$
agree for a nilpotent ideal $I$ in a unital $\Q$-algebra $A$.
The actual proof is quite short, and most of this section is devoted
to the construction of the maps \eqref{intro:ch} and \eqref{intro:ch'}.

For this it is appropriate to regard a non-negative chain complex $C$ as a
simplicial abelian group via Dold-Kan, identifying the homology of the complex
with the homotopy groups $\pi_*(C)$ by abuse of notation.
If $X$ is a simplicial set, we write $\Z[X]$ for its singular complex,
so that $H_*(X;\Z)$ is $\pi_*\Z[X]$,
and the Hurewicz map is induced by the simplicial map $h:X\to\Z[X]$.
%Note that if $X_0$ is a point,
%then the good truncation $Z_{\ge 1}[X]$
%is equivalent to the reduced singular complex:
%\[ \Z_{\ge1}[X]\map{\sim}\Z[X]/\Z. \]

\subsection{The absolute Chern character}

Let $A$ be a unital $\Q$-algebra,
%$\GL_n(A)$ the general linear group, $\GL(A)=\bigcup_n\GL_n(A)$,
and $\rB\GL(A)$ the classifying space of $GL(A)$.
Now the plus construction $\rB\GL(A)\to \rB\GL(A)^+$ is a homology
isomorphism, and $K_n(A)=\pi_n\rB\GL(A)^+$ for $n\ge1$. In particular,
the singular chain complex map $\Z[\rB\GL(A)]\to\Z[\rB\GL(A)^+]$ is
a quasi-isomorphism. As described in \cite[11.4.1]{lod},
 the absolute Chern character $ch_n:K_n(A)\to HN_n(A)$
(of Goodwillie, Jones et al.) is the composite $ch=ch^{-}_A\circ h$
of the Hurewicz map $h:\rB GL(A)^+\to \Z[\rB GL(A)^+]\simeq\Z[\rB\GL(A)]$,
the identification $\Z[\rB G]=\rB(\Z[G])$ with the bar complex,
and the chain complex map $ch^-_A$, which is defined as the
stabilization (for $GL_n\subset GL_{n+1}$) of the composites:
\begin{equation}\label{map:chA}
\xymatrix @C=1.3pc{
%\Z[\rB \GL(A)^+]\ar[d] &
\rB(\Z[\GL_n(A)])\ar[r]^{c}\ar[d] %\ar[l]_{\cong}
& HN(\Z[\GL_n(A)])\ar[r]^(.6){}\ar[d] & HN(M_n(A))\ar[r]^-{\tr}
& HN(A)_\norm.\\
%\Q[\rB \GL_n(A)^+] &
\rB(\Q[\GL_n(A)])\ar[r]^c %\ar[l]_{\cong}
& HN(\Q[\GL_n(A)])\ar[ur] & }
\end{equation}
Here $c$ is the natural map defined in \eqref{map:c} for $k=\Z$ and $\Q$;
the middle maps in the diagram are induced by the fusion maps
$\Z[\GL_n(A)]\subset\Q[\GL_n(A)]\to M_n(A)$, and $\tr$ is the trace map.
The maps $HN(\Q[\GL_n(A)])\to HN(A)_\norm$ are independent of $n$ by
\cite[8.4.5]{lod}, even though the fusion maps are not.

%When $A$ is a $\Q$-algebra, $HN(A)$ is a complex of $\Q$-modules,
%and $ch^-$ factors through the localized sequence:
%\[
%\xymatrix{\Q_{\ge1}[\rB GL(A)^+] &
%\rB_{\ge1}(\Z[\GL(A)])\ar[l]_{\cong}\ar[r]^c &
%HN_{\ge 1}(\Z[\GL(A)])\ar[r]^(.6){\tr} & HN_{\ge 1}(A)}.
%\]

\begin{ex}
If $A$ is commutative and connected, the composition of the rank map
$K_0(A)\to\Z$ sending $[A^r]$ to $r$ with the map $H_0(ch^-_A)$ yields
the Chern character $ch^-_0:K_0(A)\to HN_0(A)$ of \cite[8.3]{lod}.
Composing this with the maps $HN(A)\to HC(A)[2n]$ yields the map
$ch_{0,n}: K_0(A)\to HC_{2n}(A)$ of \cite[8.3.4]{lod}.
From Example \ref{ex:Upsilon(1)} above, with $A=k$, we see that
$ch([k])=c(1)$, and $ch_{0,n}([k])=(-1)^n(2n)!/n!$ in $HC_{2n}(k)\cong k$,
in accordance with \cite[8.3.7]{lod}.
\end{ex}

\subsection{Volodin models for the relative Chern character
	of nilpotent ideals}

In order to define the relative version $ch_*$ of the absolute
Chern character, %\eqref{intro:ch},
we need to recall a chunk of notation about Volodin models.
For expositional simplicity, we
shall assume that $I$ is a nilpotent ideal in a unital $\Q$-algebra $A$.

\begin{defn}
Let $I$ a nilpotent ideal in a $\Q$-algebra $A$, and $\sigma$ a
partial order of $\{1,\dots,n\}$.  We let $\cT_n^\sigma(A,I)$
be the nilpotent subalgebra of $M_n(A)$ defined by:
\[
\cT_n^\sigma(A,I):=\{a\in M_n(A): a_{ij}\in I \text{ if } i\nless_\sigma j\}.
\]
We write $\frakt_n^\sigma(A,I)$ for the associated Lie algebra, and
$T_n^\sigma(A,I)$ for the group
\[ T_n^\sigma(A,I) = \exp\frakt_n^\sigma(A,I)= 1+\cT_n^\sigma(A,I)
\subset GL_n(A).
\]
\end{defn}

%Note $\cT_n^\sigma(A,I)\subset M_n(A)$ is a nonunital subring.  If
%$I=A$, it is the full matrix ring; $\cT_n^\sigma(A,A)=M_n(A)$. At the
%other extreme, we write $\cT_n^{\sigma}(A):=\cT_n^{\sigma}(A,0)$; if
%$\sigma$ is the usual order, this is the usual ring $\cT_n(A)$ of
%upper triangular matrices.  Note $\cT_n^\sigma(A,I)$ is the pullback
%of $\cT_n^{\sigma}(A/I)\subset M_n(A/I)$ under $M_n(A)\to M_n(A/I)$.
%If $\sigma$ is a total order, we may identify it with a permutation
%$\sigma\in\fS_n$; the latter group acts on $M_n(A)$, and
%$\cT_n^{\sigma}(A,I)$ is the conjugate of $\cT_n(A,I)$ under the action
%of $\sigma$.  Given any ideal $I$ of $A$ and any partial order
%$\sigma$, we can also consider $\cT_n^{\sigma}(A,I)$ as a Lie ring
%with the commutator bracket; this gives the triangular Lie ring:
%\[
%\frakt_n^\sigma(A,I)=\cT_n^\sigma(A,I)_{\mathrm{Lie}}
%\]
%If $A$ happens to be an algebra over some ground ring $k$, then
%$\frakt_n^\sigma(A,I)$ will be a Lie algebra over $k$. The triangular
%group associated to $\sigma$ and $I$ is
%\[
%T_n^\sigma(A,I)=1+\cT_n^\sigma(A,I)\subset
%(\Z\cdot 1+\cT_n^\sigma(A,I))^\times\subset\GL_n(A)
%\]

%\subsection{Volodin models and the relative Chern character}
The {\it Volodin space} $X(A)\subset BGL(A)$ is defined to be the union
of the spaces $X_n(A)=\bigcup_\sigma %{\sigma\in\fS_n}
BT_n^\sigma(A,0)$; see \cite[11.2.13]{lod}. The {\it relative Volodin space}
$X(A,I)$ is defined to be the union of the spaces
$\bigcup_\sigma %{\sigma\in\fS_n}
BT_n^\sigma(A,I)$; see \cite[11.3.3]{lod}.
%As usual, $X(A,I)$ denotes the union of the $X_n(A,I)$.

The morphism $ch^-_A:\Q[B\GL(A)]\to HN(A)_\norm$
of \eqref{map:chA} sends the subcomplex $\Q[X(A,I)]$
to $HN(A,I)_\norm$, which is the kernel of $HN(A)_\norm\to HN(A/I)_\norm$;
see \cite[11.4.6]{lod}. The chain map $ch^-$ is
defined to be the restriction of $ch^-_A$:
\begin{equation}\label{map:chm}
ch^-:\Q[X(A,I)]\to HN(A,I)_\norm.
\end{equation}

It will be useful to have a more detailed description of the restriction
of \eqref{map:chm} to $\Q[BT_n^\sigma(A,I)]$. Recall that if $\Lambda$ is a
non-unital subalgebra of $R$ then $\Q+\Lambda$ is a unital subalgebra of $R$;
we write $C(\Lambda)$ for the cyclic submodule $C(\Q+\Lambda,\Lambda)$
of $C(\Q+\Lambda)$ and hence of $C(R)$, following \cite[2.2.16]{lod}.
When $\Lambda=\cT_n^\sigma(A,I)$ and $R=M_n(A)$, we obtain the cyclic
submodule $C(\cT_n^\sigma(A,I))$ of $C(M_n(A))$.

The trace map $C(M_n(A))\to C(A)$ is a morphism of cyclic modules,
sending $C(\cT_n^\sigma(A,I))$ to the submodule $C(A,I)$.
%The composite of
%$HN(\cT_n^\sigma(A,I))\subset HN(M_n(A))$ followed by the trace sends
%$HN(\cT_n^\sigma(A,I))$ inside $HN(A,I)$. We abuse notation and write
%$\tr:HN(\cT_n^\sigma(A,I))\to HN(A,I)$ for the induced map.
On the other hand, by Example \ref{ex:rho}, we also have a map
\[
C(\Q[G],\cI_{G}) \map{j} C(G), \quad\text{for~} G=\cT_n^\sigma(A,I).
%C(\Q[T^\sigma_n(A,I)],\cI_{T^\sigma_n(A,I)}) \map{j} C(\cT_n^\sigma(A,I)).
\]
From the definition of $ch^-_A$ in \eqref{map:chA}
and the naturality of $c$,
we obtain the promised description of $ch^-$, which we record.

\begin{lem}\label{lem:jc-ch}
Set $G=T_n^\sigma(A,I)$. The restriction of $ch^-$ to $\rB(\Q[G],\cI_G)$
is the composition
\[
\rB(\Q[G],\cI_G) \map{c} HN(\Q[G],\cI_G) \map{j} HN(\cT_n^\sigma(A,I))
\map{\tr} HN(A,I)_\norm.
\]
\end{lem}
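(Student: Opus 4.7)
The plan is to directly unwind the definition of $ch^-_A$ in \eqref{map:chA} when restricted to the subcomplex $\rB(\Q[G], \cI_G) \subset \rB(\Q[\GL_n(A)])$, and to identify the resulting composition as $\tr \circ j \circ c$.

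First, I would invoke the naturality of $c$ (from Section \ref{sec2}) applied to the inclusion of groups $G = T_n^\sigma(A,I) \hookrightarrow \GL_n(A)$. This yields a commutative square under which the restriction of $ch^-$ to $\rB(\Q[G])$ becomes the composition
\[
\rB(\Q[G]) \xrightarrow{c} HN(\Q[G]) \to HN(M_n(A)) \xrightarrow{\tr} HN(A)_\norm,
\]
where the middle map is induced by the ring homomorphism $\Q[G] \to M_n(A)$ obtained by restricting the fusion map of \eqref{map:chA}.

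Next, I would identify this ring homomorphism $\Q[G] \to M_n(A)$ with the composite $\Q[G] \to \hat{U}\g \to M_n(A)$ from Example \ref{ex:rho}, where $\g = \frakt_n^\sigma(A,I)$ and the isomorphism $G \cong \exp(\g)$ comes from \cite[5.2]{sw}. This homomorphism carries $\cI_G$ into $\cT_n^\sigma(A,I) \subset M_n(A)$, and since $c$ is filtration-preserving (subsection \ref{subsec:passage}), it restricts to a chain map $\rB(\Q[G], \cI_G) \to HN(\Q[G], \cI_G)$, whose image in $HN(M_n(A))$ lies in the submodule $HN(\cT_n^\sigma(A,I))$ via the map $j$ of Example \ref{ex:rho}.

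Finally, the matrix trace carries $\cT_n^\sigma(A,I)$ into $I$ because the diagonal entries of any element of $\cT_n^\sigma(A,I)$ lie in $I$ (as $i \nless_\sigma i$ for the strict partial order $\sigma$), so it induces $\tr: HN(\cT_n^\sigma(A,I)) \to HN(A,I)_\norm$ as recorded in the paragraph preceding the lemma. Combining these three observations gives the claimed factorization. The entire argument is essentially a diagram chase; the only point requiring care is the consistent matching of the various relative (kernel-of-augmentation) structures along each arrow, which is guaranteed by the filtration-preserving nature of all the maps in play.
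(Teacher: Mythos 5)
Your proof is correct and follows essentially the same route the paper indicates, which is to unwind the definition of $ch^-_A$ in \eqref{map:chA}, invoke naturality of $c$ for the group inclusion $T_n^\sigma(A,I) \hookrightarrow \GL_n(A)$, and then observe that the composite $\Q[G]\to\hat{U}\g\to M_n(A)$ of Example~\ref{ex:rho} agrees with the fusion map, so the resulting map on $HN$ is by definition $j$. The paper compresses this to one sentence (``From the definition of $ch^-_A$ and the naturality of $c$''), and your proposal supplies the missing diagram chase; the only minor imprecision is that for the trace to land in $C(A,I)$ in all degrees one needs not just that diagonal entries lie in $I$ but that any cyclic sequence $i_0, i_1, \dots, i_m, i_0$ must have at least one step $i_j \nless_\sigma i_{j+1}$ (which follows from antisymmetry and transitivity of the strict order $\sigma$), a fact the paper has already recorded in the paragraph preceding the lemma.
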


\subsection{The relative Chern character for rational nilpotent ideals}

When $I$ is a nilpotent ideal in an algebra $A$, we define
$K(A,I)$ to be the homotopy fiber of
$B\GL(A)^+\to B\GL(A/I)^+$; %our assumption that $I$ is nilpotent implies that
$K(A,I)$ is a connected space whose homotopy groups %$\pi_nK(A,I)$
are the relative $K$-groups $K_n(A,I)$ for all $n$.
We now cite Theorem 6.1 of \cite{OW} for nilpotent $I$; the proof in \cite{OW}
is reproduced on page 361 of \cite{lod}.

\begin{thm}\label{OW6.1}
If $I$ is a nilpotent ideal in $A$, there are homotopy fibrations
\begin{gather*} X(A,I) \to B\GL(A) \to B\GL(A/I)^+, \\
X(A) \to X(A,I) \to K(A,I).
\end{gather*}
Moreover, $X(A,I)^+ \map{\sim} K(A,I)$ and
$\Z[X(A,I)]\map{\sim}\Z[K(A,I)]$ are
homotopy equivalences (i.e., $X(A,I)\to K(A,I)$ is a homology isomorphism).
\end{thm}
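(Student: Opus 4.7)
The plan is to derive both homotopy fibrations from Suslin's absolute Volodin theorem, which asserts that the homotopy fiber of $B\GL(A)\to B\GL(A)^+$ is precisely $X(A)$, equivalently that $X(A)$ is acyclic. For the first fibration $X(A,I)\to B\GL(A)\to B\GL(A/I)^+$, the crucial observation is that reduction mod $I$ carries each subgroup $T_n^\sigma(A,I)\subset GL_n(A)$ into the absolute Volodin subgroup $T_n^\sigma(A/I,0)\subset GL_n(A/I)$. Thus the composition $X(A,I)\to B\GL(A)\to B\GL(A/I)$ factors through $X(A/I)$, and its further composition to $B\GL(A/I)^+$ is null-homotopic by Suslin's theorem. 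This produces a canonical lift $X(A,I)\to F$, where $F$ denotes the homotopy fiber of $B\GL(A)\to B\GL(A/I)^+$.

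Next I would verify that this canonical lift is a weak equivalence, proceeding by induction on the nilpotency index $k$ with $I^k=0$. The base case $k=2$ can be handled by a direct group-homology comparison: since $T_n^\sigma(A,I)$ is a central extension of $T_n^\sigma(A/I,0)$ by the abelian group of $I$-entries, the Lyndon-Hochschild-Serre spectral sequence matches its homology against that of $F$, which is computed by the Serre spectral sequence of $F\to B\GL(A)\to B\GL(A/I)^+$. The inductive step factors $A\to A/I$ through the intermediate quotient $A/I^{k-1}$ and uses naturality to reduce to the base case.

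Given the first fibration, the second fibration $X(A)\to X(A,I)\to K(A,I)$ follows by comparing fibers of the diagram sitting over $B\GL(A/I)^+$: the defining fibration $K(A,I)\to B\GL(A)^+\to B\GL(A/I)^+$ receives a map from $F\simeq X(A,I)\to B\GL(A)\to B\GL(A/I)^+$, and the fiber of the induced map $X(A,I)\to K(A,I)$ is identified with the Suslin fiber $X(A)$ of $B\GL(A)\to B\GL(A)^+$. For the moreover statement, the map $X(A,I)\to K(A,I)$ factors through the plus construction because $K(A,I)$ is an $H$-space (hence simple), and the induced map $X(A,I)^+\to K(A,I)$ is a homology equivalence between simple spaces by comparison with the plus construction map $X(A,I)\to X(A,I)^+$; the singular-chain equivalence $\Z[X(A,I)]\simeq\Z[K(A,I)]$ is then an immediate consequence, since the plus construction is always a homology isomorphism.

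The principal obstacle lies in the inductive step: because $X(A,I)$ is a union, not a product, of the classifying spaces $BT_n^\sigma(A,I)$, identifying its global homotopy type requires a delicate spectral-sequence argument of Suslin-Wodzicki type that interleaves the combinatorics of admissible partial orders $\sigma$ with the group homology of the nilpotent groups $T_n^\sigma(A,I)$. This is the technical core of both Suslin's original acyclicity theorem and the Ogle-Weibel relative refinement.
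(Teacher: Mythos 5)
The paper does not give its own proof of Theorem \ref{OW6.1}: it explicitly cites Theorem 6.1 of Ogle--Weibel \cite{OW} and points to \cite[p.~361]{lod} for the argument. Your proposal can therefore only be judged on its own terms, and while the broad outline (lift $X(A,I)$ to the homotopy fiber $F$ of $B\GL(A)\to B\GL(A/I)^+$, compare homology, deduce the second fibration by a fiberwise comparison, and invoke the $H$-space structure of $K(A,I)$ for the final claim) is reasonable, it contains a factual error and leaves the central technical difficulty where you found it.

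The error: $T_n^\sigma(A,I)$ is \emph{not} a central extension of $T_n^\sigma(A/I,0)$, even when $I^2=0$. The kernel of $T_n^\sigma(A,I)\to T_n^\sigma(A/I,0)$ is $1+M_n(I)$, which is abelian when $I^2=0$, but the induced conjugation action of $T_n^\sigma(A/I,0)$ on $M_n(I)$ is genuinely nontrivial; any Lyndon--Hochschild--Serre argument must therefore carry nontrivial local coefficients. The more serious gap is that a homology comparison cannot by itself certify the first sequence as a homotopy fibration. You propose to match the Lyndon--Hochschild--Serre spectral sequence of a \emph{single} group $T_n^\sigma(A,I)$ against the Serre spectral sequence of the global fibration $F\to B\GL(A)\to B\GL(A/I)^+$; these live on different spaces with no evident comparison map. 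Moreover $X(A,I)$ is the union over all $n$ and $\sigma$ of the $BT_n^\sigma(A,I)$, and both $X(A,I)$ and $F$ have large non-abelian fundamental groups, so identifying the lift $X(A,I)\to F$ as a weak (or even acyclic) equivalence requires controlling $\pi_1$ and homology with local coefficients, not just integral homology. You correctly flag the union/gluing problem in your final paragraph, but that Suslin--Wodzicki-style argument is precisely the substance of the Ogle--Weibel proof; as written, the proposal defers it rather than supplies it.
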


\begin{defn}\label{map:chrel} (See \cite[11.4.7]{lod})
The {\it relative Chern character} for the ideal $I$ of a $\Q$-algebra
$A$ is the %\edit{$\Q$ not $\Z$ 7/20}
composite of the Hurewicz map, the inverse of the homotopy equivalence
of Theorem \ref{OW6.1} and the map $ch^-$ of \eqref{map:chm}:
\begin{equation*}
\xymatrix{ch:K(A,I)\ar[r]^h&\Q[K(A,I)]&
\Q[X(A,I)]\ar[l]_{\sim}^{\ref{OW6.1}}\ar[r]^{ch^-}&HN(A,I)_\norm.}
\end{equation*}
\end{defn}
%This is the relative character described in \cite[11.4.5-7]{lod}.

\subsection{The rational homotopy theory character for nilpotent ideals}

For a nilpotent ideal $I$, consider the chain subcomplex of the
Chevalley-Eilenberg complex $\wedge\mathfrak{gl}(A)$,
\[
x(A,I)=\sum_{n,\sigma} \wedge\frakt_n^\sigma(A,I).
\]
Because $sw$ is natural in $G$, the family of maps
$\rB(\Q[T_n^\sigma(A,I)])\map{sw} \wedge\frakt_n^\sigma(A,I)$
induces a morphism of complexes
\[
sw_X:\Q[X(A,I)]\to x(A,I).
\]
On the other hand, for each $n$ and $\sigma$, the composite of the map
$B\rho:\wedge\frakt_n^\sigma(A,I)\to HN(\cT_n^\sigma(A,I))$
%$B\rho:\wedge^+\frakt_n^\sigma(A,I)\to HN_{\ge 1}(\cT_n^\sigma(A,I))$
%\edit{OK?  7/18} %$\wedge\frakt_n^\sigma$, $HN(A,I)$?}
of Example \ref{ex:rho} with the inclusion and the trace, i.e., with
\[
HN(\cT_n^\sigma(A,I))\subset HN(M_n(A)) \map{tr} HN(A),
%HN_{\ge1}(\cT_n^\sigma(A,I))\subset HN_{\ge1}(M_n(A)) \map{tr} HN_{\ge1}(A),
\]
sends $\wedge\frakt_n^\sigma(A,I)$ into the subcomplex $HN(A,I)$.
%sends $\wedge^+\frakt_n^\sigma(A,I)$ into the subcomplex $HN_{\ge1}(A,I)$.
%\edit{OK without $\ge1$!}
All of these maps are natural in $n$ and $\sigma$;
by abuse of notation, we write $\tr(B\,\rho)$ for the resulting map:
\[
\tr(B\,\rho):x(A,I)\to HN(A,I).
\]

\begin{defn}\label{map:rht}
The map $ch^-_{\rht}: \Q[X(A,I)]\to x(A,I)\to HN(A,I)_\norm$
is defined to be $\tr(B\,\rho)\circ sw_X$,
followed by $HN(A,I)\to HN(A,I)_\norm$.
The {\it rational homotopy theory character} of \cite[11.3.1]{lod},
cited in \eqref{intro:ch'}, is the composite
$ch':K(A,I) \to HN(A,I)_\norm$ defined by:
\begin{equation*}
K(A,I) \map{h}
\Q(K(A,I)) \ {\buildrel\simeq\over\leftarrow} \ \Q[X(A,I)]
%\Q_{\ge1}(K(A,I)) \ {\buildrel\simeq\over\leftarrow} \ \Q_{\ge1}[X(A,I)]
% \lmap{sw_X} x_{\ge1}(A,I) \lmap{\tr(B \rho)}
\lmap{ch^-_{\rht}} HN(A,I)_\norm. %HN_{\ge 1}(A,I).
\end{equation*}%\edit{deleted $\ge1$!}
\end{defn}

\begin{rem}
We will not need the unnormalized version of $ch'$. %\edit{7/18}
By construction, $ch'$ is the map
$K(A,I)\to C^\lambda(A,I)[-1]$ of \cite[A.13]{chwinf},
followed by Connes' operator $B$.
\end{rem}

\subsection{Main theorem}

Let $I$ be a nilpotent ideal in a $\Q$-algebra $A$. The relative Chern
character $ch$ of Definition \ref{map:chrel} induces the relative Chern
character $ch_*$ of \eqref{intro:ch} on homotopy groups, and the
rational homotopy character $ch'$ of Definition \ref{map:rht} induces
the character $ch'_*$ of \eqref{intro:ch'} on homotopy groups.
Therefore the equality $ch_*=ch'_*$ of \eqref{intro:agree} is an immediate
consequence of our main theorem.

\begin{thm}\label{thm:main}
The maps $ch^-$ and  $ch^-_{\rht}$ are naturally chain homotopic.
Hence the maps $ch$ and $ch'$ are
homotopic for each $A$ and $I$. %\edit{normalized version?}
\end{thm}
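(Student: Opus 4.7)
The plan is to reduce Theorem \ref{thm:main} to the naturally homotopy-commutative diagram of Example \ref{ex:rho}, applied one nilpotent piece of the Volodin space at a time, and then to assemble the pieces by naturality. Recall that $\Q[X(A,I)]$ is the union of the subcomplexes $\rB(\Q[T_n^\sigma(A,I)], \cI_{T_n^\sigma(A,I)})$, and these subcomplexes are closed under the inclusions induced by $\sigma \subset \tau$ and by $T_n^\sigma \hookrightarrow T_{n+1}^\sigma$. It therefore suffices to construct, for each piece, a chain homotopy between the restrictions of $ch^-$ and $ch^-_\rht$ that is natural in the ideal $\cT_n^\sigma(A,I)$, so that the homotopies glue into one on the union.

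Fix $(n, \sigma)$ and write $G = T_n^\sigma(A,I)$ and $\cT = \cT_n^\sigma(A,I)$; the latter is a nilpotent ideal in the unital algebra $\Q + \cT$, with associated Lie algebra $\frakt_n^\sigma(A,I)$ and associated nilpotent group $G$. By Lemma \ref{lem:jc-ch}, the restriction of $ch^-$ to $\rB(\Q[G], \cI_G)$ equals $\tr \circ j \circ c$. Unwinding Definition \ref{map:rht}, the corresponding restriction of $ch^-_\rht$ equals $\tr \circ B \circ \rho \circ sw$ (followed by normalization). Both composites factor through the trace $HN(\cT) \to HN(A,I)_\norm$.

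Now apply Example \ref{ex:rho} to the pair $(\Q + \cT, \cT)$: the resulting diagram provides a natural chain homotopy between $j \circ c$ and $B \circ \rho \circ sw$ as chain maps $\rB(\Q[G], \cI_G) \to HN(\cT)$ (up to normalization, which is a canonical quasi-isomorphism). Post-composing with $\tr$ yields the desired natural chain homotopy between the restrictions of $ch^-$ and $ch^-_\rht$ on the piece $\rB(\Q[G], \cI_G)$.

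It remains to assemble these piecewise homotopies into one on $\Q[X(A,I)]$. Since every ingredient --- the Suslin-Wodzicki map $sw$, the lift $c$ of \eqref{map:c}, the Loday-Quillen map $\theta$ (and hence $\rho$), and the chain homotopies constructed via Lemma \ref{lem:kappa} in Proposition \ref{prop:nil1} and Theorem \ref{thm:theta=c} --- is functorial in the underlying group and Lie algebra data, the piecewise homotopies automatically agree on overlaps and glue into a single natural chain homotopy $ch^- \simeq ch^-_\rht$. Combining with the Hurewicz map and the homology equivalence $\Z[X(A,I)] \simeq \Z[K(A,I)]$ of Theorem \ref{OW6.1} then yields the desired $ch \simeq ch'$. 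The main obstacle is this coherence, but it follows directly from the functoriality of each step in the construction.
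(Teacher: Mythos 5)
Your proposal matches the paper's proof: both restrict to $\rB(\Q[T_n^\sigma(A,I)],\cI_G)$ for fixed $n,\sigma$, apply Lemma \ref{lem:jc-ch} to identify the restriction of $ch^-$ as $\tr\circ j\circ c$, invoke the homotopy-commutative diagram of Example \ref{ex:rho} to obtain a natural chain homotopy to $\tr\circ B\rho\circ sw$, and glue by naturality. You spell out the functoriality and coherence checks a bit more explicitly, but the argument is essentially identical.
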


\begin{proof}
%It suffices to show that the map $ch^-_{\rht}$
%is naturally chain homotopic to the relative map $ch^-$ of \eqref{map:chm}.
We first consider the restriction of $ch^-$ and $ch^-_\rht$ to
$\rB(\Q\,[T_n^\sigma(A,I)])$ for some fixed
$n$ and $\sigma$. By Lemma \ref{lem:jc-ch}, the restriction of $ch^-$ to
$\rB(\Q\,[T_n^\sigma(A,I)])$ is the map $\tr(jc)$; by Example \ref{ex:rho},
there is a natural chain homotopy from $jc$ to $B\,\rho\circ sw$.
Since $\tr(B\,\rho)\circ sw$ is the restriction of $ch^-_{\rht}$ to
$\rB(\Q\,[T_n^\sigma(A,I)])$, the chain homotopies glue together
by naturality to give the desired chain homotopy
%$\Q_{\ge1}[X(A,I)]\to HN_{\ge 1}(A,I)[1]$
from $ch^-_{\rht}$ to $ch^-$.
\quad\qed\end{proof}

\subsection{Naturality}
In order to formulate a naturality result for the homotopy between
$ch$ and $ch'$, it is necessary to give definitions for the maps $ch$ and
$ch'$ which are natural in $A$ and $I$.
Contemplation of Definitions \ref{map:chrel} and \ref{map:rht} shows
that we need to find a natural inverse for the backwards quasi-isomorphism
of Theorem \ref{OW6.1}. One standard way is to fix a small category of
pairs $(A,I)$ (say all pairs with a fixed cardinality bound on $A$) and
consider the global model structure on the category of covariant functors
from this category to Simplicial Sets; this will yield naturality with
respect to all morphisms in the small category.

Let $K(A,I)'$ be the cofibrant replacement of $K(A,I)$, and factor the
backwards map as \linebreak
$\Q\,[K(A,I)]\ {\buildrel\simeq\over\twoheadleftarrow}\
C {\buildrel\simeq\over\leftarrowtail} \Q[X(A,I)]$.  Then $h$ lifts to
a map $h':K(A,I)'\to C$ and, since $HN(A,I)$ is fibrant,
$ch^-$ and $ch^-_\rht$ both lift to maps $C\to HN(A,I)$. We can then
define $ch$ to be the composite $K(A,I)' \map{h'} C \map{ch^-} HN(A,I)$;
$ch'$ is defined similarly using $ch^-_\rht$. By Theorem \ref{thm:main},
there is a homotopy between the two maps $C\to HN(A,I)$, and hence between
the two maps $K(A,I)'\to HN(A,I)$. Since this is a homotopy of functors
from the small category of pairs $(A,I)$ to Simplicial Sets, it provides
a natural homotopy between $ch$ and $ch'$ as maps $K(A,I)'\to HN(A,I)$.

\section*{Acknowledgments}
We would like to thank Gregory Ginot and Jean-Louis Cathelineau for
bringing the $ch_*=ch'_*$ problem to our attention. We are also grateful
to Christian Haesemeyer for several discussions about the relation between
this paper and our joint paper \cite{chwinf}.

\end{document}